\theoremstyle{plain}
\newtheorem{theorem}[equation]{Theorem}
\newtheorem{lemma}[equation]{Lemma}
\theoremstyle{definition}
\theoremstyle{remark}
\newtheorem{remark}[equation]{Remark}
\newcommand{\dv}{\operatorname{div}}
\newcommand{\dist}{\operatorname{dist}}
\newcommand{\diam}{\operatorname{diam}}
\newcommand{\tr}{\operatorname{tr}}
\numberwithin{equation}{section}
\newcommand{\bR}{\mathbb{R}}
\newcommand\cD{\mathcal{D}}
\providecommand{\set}[1]{\{#1\}}
\providecommand{\Set}[1]{\left\{#1\right\}}
\providecommand{\abs}[1]{\lvert#1\rvert}
\providecommand{\Abs}[1]{\left\lvert#1\right\rvert}
\providecommand{\norm}[1]{\lVert#1\rVert}
\renewcommand{\vec}[1]{\boldsymbol{#1}}
\begin{document}
\title[Regularity of elliptic equations in double divergence form]
{Regularity of elliptic equations in double divergence form and applications to Green's function estimates}

\author[J. Choi]{Jongkeun Choi}
\address[J. Choi]{Department of Mathematics Education, Pusan National University,  Busan, 46241, Republic of Korea}
\email{jongkeun\_choi@pusan.ac.kr}
\thanks{J. Choi was supported by the National Research Foundation of Korea (NRF) under agreement NRF-2022R1F1A1074461.}

\author[H. Dong]{Hongjie Dong}
\address[H. Dong]{Division of Applied Mathematics, Brown University,
182 George Street, Providence, RI 02912, United States of America}
\email{Hongjie\_Dong@brown.edu}
\thanks{H. Dong was partially supported by the NSF under agreement DMS-2055244.}

\author[D. Kim]{Dong-ha Kim}
\address[D. Kim]{Department of Mathematics, Yonsei University, 50 Yonsei-ro, Seodaemun-gu, Seoul 03722, Republic of Korea}
\email{skyblue898@yonsei.ac.kr}

\author[S. Kim]{Seick Kim}
\address[S. Kim]{Department of Mathematics, Yonsei University, 50 Yonsei-ro, Seodaemun-gu, Seoul 03722, Republic of Korea}
\email{kimseick@yonsei.ac.kr}
\thanks{S. Kim is supported by the National Research Foundation of Korea (NRF) under agreement NRF-2022R1A2C1003322.}

\subjclass[2010]{Primary 35B45, 35B65,  35J08}

\keywords{}

\begin{abstract}
We investigate the regularity of elliptic equations in double divergence form, where the leading coefficients satisfying the Dini mean oscillation condition. We prove that the solutions are differentiable on the zero level set and derive a pointwise bound for the derivative, which substantially improve a recent result by Leit\~ao, Pimentel,  and Santos (Anal. PDE 13(4):1129--1144, 2020).
As an application, we establish global pointwise estimates for the Green's function of second-order uniformly elliptic operators in non-divergence form, considering Dini mean oscillation coefficients in bounded $C^{1,\alpha}$ domains. This result extends a recent work by Chen and Wang (Electron. J. Probab. 28(36):54 pp,  2023).
\end{abstract}
\maketitle

\section{Introduction and main results}
We consider the elliptic operator $L^*$ of the form
\[
L^* u= \sum_{i,j=1}^d D_{ij}(a^{ij}u) - \sum_{i=1}^d D_i(b^i u) + cu=\dv^2 (\mathbf A u)-\dv(\vec bu) + cu,
\]
defined on a domain $\Omega \subset \bR^d$ with $d \ge 2$.
We assume that the coefficients $\mathbf{A}:=(a^{ij})$ are symmetric and satisfy the uniform ellipticity condition:
\begin{equation}					\label{ellipticity-nd}
a^{ij}=a^{ji},\quad \lambda \abs{\xi}^2 \le \sum_{i,j=1}^d a^{ij}(x) \xi^i \xi^j \le \Lambda \abs{\xi}^2,\quad \forall x \in \Omega,
\end{equation}
for some positive constant $\lambda$ and $\Lambda$.
We will impose additional assumptions on the coefficients $\mathbf A$, $\vec b$, and $c$ shortly.

The operator $L^*$ is often referred to as a double divergence form operator and is the formal adjoint of the elliptic operator in non-divergence form $L$, given by
\[
Lv=\sum_{i,j=1}^d a^{ij} D_{ij}v + \sum_{i=1}^d b^i D_i  v + cv=\tr(\mathbf A D^2 v)+ \vec b \cdot Dv +cv.
\]
An important example of a double divergence form equation is the stationary Kolmogorov equation for invariant measures of a diffusion process; see \cite{BKRS15}.

In this article, we are concerned with the regularity of weak solutions of $L^*u=0$.
By a weak solution, we mean a distributional solution that is locally integrable.
In the context of regularity, there is a notable distinction between equations in double divergence form and those in divergence or non-divergence form.
As noted in \cite{BS17}, solutions of $\dv^2(\mathbf A u)=0$ are not more regular than $\mathbf A$ itself.
For instance, consider the one-dimensional equation $(au)'' = 0$.
For any affine function $\ell$, the function $u = \ell /a$ is a solution.
Therefore, if the coefficient $a$  is merely measurable, then so is the function $u$.

This raises a natural question: does the continuity of the coefficient $\mathbf A$ imply the continuity of a solution to $\dv^2(\mathbf A u)=0$?
The answer is known to be negative.
There exists a weak solution of $\dv^2 (\mathbf A u)=0$ that is unbounded even when $\mathbf A$ is uniformly continuous.
However, if $\mathbf A$ is Dini continuous, then a weak solution of $L^*u=0$ is continuous.
More precisely, a weak solution has a continuous representative.
In particular, if $\mathbf A$ is $C^\alpha$-continuous with $\alpha\in (0,1)$, then the solution is also H\"older continuous with the same exponent $\alpha$; see \cite{Sjogren73, Sjogren75}.

More recently, it has been shown that if $\mathbf A$ has Dini mean oscillation and $\vec b$ and $c$ belong to certain Lebesgue classes, then a weak solution of $L^*u=0$ is continuous; see \cite{DK17, DEK18}.
A Dini continuous function has Dini mean oscillation, but the converse is not necessarily true.

In an interesting recent paper \cite{LPS20}, the authors demonstrated that if $\mathbf A$ is H\"older continuous and $u$ is a weak solution of $\dv^2(\mathbf Au)=0$ with $u(x^o)=0$, then $u$ is $C^{1-\varepsilon}$ at $x^o$ for any $\varepsilon\in (0,1)$.
More precisely, there exists a constant $C>0$ such that
\[
\sup_{B_r(x^o)}\, \abs{u} = \sup_{B_r(x^o)}\, \abs{u-u(x^o)} \le C  r^{1-\varepsilon},\;\text{for every }\varepsilon \in (0,1).
\]
Recall that for double divergence elliptic equations, $u$ generally cannot have better regularity than $\mathbf A$.
Therefore, the result in \cite{LPS20} suggests that the solution exhibits improved regularity on its nodal set.

It is natural to ask if $u$ is differentiable at $x^o$ under the  assumptions above, and whether the H\"older continuity condition on $\mathbf A$ can be relaxed to the Dini continuity condition or even the Dini mean oscillation condition.

In this article, we provide affirmative answers to both questions, substantially improving the result in \cite{LPS20}.
We also present an analogous result when $x^o$ lies on the boundary, assuming the boundary  is $C^{1,\alpha}$ for some $\alpha \in (0,1)$.
Moreover, in the case where $\mathbf A$ is $C^\alpha$, as assumed in \cite{LPS20}, our proof reveals a stronger and optimal pointwise $C^{1,\alpha}$ regularity:
\[
\abs{u(x)-Du(x^o)\cdot(x-x^o)}=O(\abs{x-x^o}^{1+\alpha}) \quad\text{as}\quad x\to x^o.
\]

We  became interested in this problem after learning about the recent paper \cite{CW23} on the Green's function estimates for elliptic operators in non-divergence form with Dini continuous coefficients.
Let $L_0$ be the operator given by $L_0 u=\tr(\mathbf A D^2u)$, where $\mathbf A$ is a Dini continuous coefficient.
Then, the Green's function $G(x,y)$ for the operator $L_0$ in a $C^{1,1}$ domain $\Omega \subset \bR^d$ ($d\ge 3$) satisfies the upper bound
\[
G(x,y) \le \frac{C}{\abs{x-y}^{d-2}} \left(1 \wedge \frac{d_x}{\abs{x-y}} \right) \left(1 \wedge \frac{d_y}{\abs{x-y}} \right),
\]
where $a\wedge b=\min(a,b)$ and $d_x=\dist(x,\partial\Omega)$.
This implies, in particular, that  $G(x.\,\cdot\,)$ has Lipschitz decay near the boundary.
Since $L^*G(x,\,\cdot\,)=0$ away from $x$ and vanishes on $\partial\Omega$, our results suggest that $G(x.\,\cdot\,)$ should exhibit Lipschitz decay near the boundary when $\mathbf A$ has Dini mean oscillation.

As an application of our result, we  establish the above bound for the Green's function under the weaker assumptions that $\mathbf A$ has Dini mean oscillation and  $\Omega$ is a $C^{1,\alpha}$ domain for some $\alpha \in (0,1)$.

Prior to presenting our main results, we will outline the conditions imposed on the coefficients.
We say that a function $f$ defined on $\Omega$ has Dini mean oscillation and write $f \in \mathrm{DMO}(\Omega)$ if the mean oscillation function $\omega_f: \bR_+ \to \bR$, defined by
\[
\omega_f(r):=\sup_{x\in \Omega} \fint_{\Omega \cap B_r(x)} \,\abs{f(y)-(f)_{\Omega \cap B_r(x)}}\,dy, \;\; \text{where }(f)_{\Omega \cap B_r(x)}=\fint_{\Omega \cap B_r(x)} f,
\]
satisfies the Dini condition, i.e.,
\[
\int_0^1 \frac{\omega_f(t)}t \,dt <+\infty.
\]
It is evident that if $f$ is Dini continuous, then $f$ has Dini mean oscillation.
However, it is noteworthy that the Dini mean oscillation condition is  less restrictive than Dini continuity; see \cite[p. 418]{DK17} for a concrete example.
Moreover, if $f$ has Dini mean oscillation, it follows that $f$ is uniformly continuous, with its modulus of continuity governed by $\omega_f$, as detailed in the Appendix of \cite{HK20}.

We would also like to point out that there is related literature addressing continuity conditions weaker than Dini continuity, which differ from the Dini mean oscillation condition.
For example, see \cite{MMcO2010, MMcO2022}.

In what follows, we consider the more general inhomogeneous equation:
\[
L^* u=\dv^2 \mathbf{f}+\dv \vec g+ h\quad\text{in }\;\Omega,
\]
where $\mathbf f =(f^{ij})$ is a symmetric $d \times d$ matrix valued function, $\vec g=(g^1,\ldots, g^d)$ is a vector valued function, and $h$ is a scalar function.

Hereafter, we impose the following assumptions:
\begin{equation}			\label{cond_lower}
\mathbf{A} \in \mathrm{DMO}(\Omega), \;\; \vec b \in L^{p_0}(\Omega), \;\text{ and }\;c \in L^{p_0/2}(\Omega)\;\text{ for some }p_0>d.
\end{equation}
Additionally, we assume:
\begin{equation}			\label{cond_dat}
D \mathbf f \in \mathrm{DMO}(\Omega),\;\; \vec g \in \mathrm{DMO}(\Omega), \;\text{ and }\; h\in L^{p_0}(\Omega)\,\text{ for some }p_0>d.
\end{equation}
These conditions ensure that the following functions satisfy the Dini condition (see Remark~\ref{rmk_omega}):
\begin{align}
					\label{omega_coef}
\omega_{\rm coef}(r)&:=  \omega_{\mathbf A}(r)+  r\sup_{x \in \Omega} \fint_{\Omega \cap B_r(x)}\abs{\vec b}+ r^2\sup_{x \in \Omega} \fint_{\Omega \cap B_r(x)}\abs{c},\\
					\label{omega_dat}
\omega_{\rm dat}(r)&:=  \omega_{D \mathbf f}(r)+\omega_{\vec g}(r) + r \sup_{x \in \Omega} \fint_{B_r(x) \cap \Omega}\abs{h}.
\end{align}

We say that  $u\in L^1_{\rm loc}(\Omega)$ is a weak solution (or an adjoint solution) of
\[
L^* u=\dv^2 \mathbf{f}+\dv \vec g+ h\quad\text{in }\;\Omega,
\]
if $\vec b u $, $cu \in L^1_{\rm loc}(\Omega)$, and for all $\varphi \in C^\infty_c(\Omega)$, the following holds:
\[
\int_\Omega u a^{ij} D_{ij} \varphi + u b^i D_i \varphi +u c\varphi=\int_\Omega f^{ij} D_{ij}\varphi-g^i  D_i\varphi+h\varphi,
\]
where the usual summation convention over repeated indices is applied.

Now, we state our main results.
We use the following notation:
\[
d_x=\dist(x, \partial \Omega),\quad a\wedge b= \min(a,b),
\]
and $C=C(\alpha, \beta, \cdots)$ to indicate that $C$ is a quantity depending on $\alpha$, $\beta$, $\cdots$.
Additionally, we write $A \lesssim B$ if $A \le c B$ for some $c>0$.

\begin{theorem}			\label{thm-main01}
Let $\Omega \subset \bR^d$ be a domain, and assume the conditions \eqref{ellipticity-nd}--\eqref{cond_dat} hold.
Let $u \in L^\infty_{\rm loc}(\Omega)$ be a weak solution of
\[
L^* u=\dv^2 \mathbf{f}+\dv \vec g+ h\quad\text{in }\;\Omega.
\]
If $u(x^o)=0$ for some $x^o\in\Omega$, then $u$ is differentiable at $x^o$.
More precisely, there exist moduli of continuity $\varrho_{\rm coef}$ and $\varrho_{\rm dat}$ determined by $\omega_{\rm coef}$ and $\omega_{\rm dat}$ in \eqref{omega_coef} and \eqref{omega_dat}, respectively, such that for $R_0:=r_0 \wedge \frac12 d_{x^o}$ and $0<\abs{x-x^o}<\frac12 R_0$, the following estimate holds:
\[
\frac{\abs{u-Du(x^o)\cdot (x-x^o)}}{\abs{x-x^o}} \le C  \varrho_{\rm coef}(\abs{x-x^o})\left(\frac{1}{R_0} \fint_{B_{2R_0}(x^o)} \abs{u} + \varrho_{\rm dat}(R_0) \right)+ C \varrho_{\rm dat}(\abs{x-x^o}),
\]
where $C$ and $r_0$ are positive constants depending only on $d$, $\lambda$, $\Lambda$, and $\omega_{\rm coef}$.
In particular, if $\omega_{\rm coef}(r) \lesssim r^\alpha$ for some $\alpha \in (0,1)$, then $\varrho_{\rm coef}(r) \lesssim r^\alpha$, and $\varrho_{\rm dat}\equiv 0$ if the inhomogeneous terms $\mathbf f$, $\vec g$, and $h$ are all zero.
Moreover, the derivative of $u$ at $x^o$ satisfies the estimate:
\[
\abs{D u (x^o)} \le C \left( \frac{1}{R_0} \fint_{B_{2R_0}(x^o)} \abs{u} + \varrho_{\rm dat}(R_0)\right).
\]
\end{theorem}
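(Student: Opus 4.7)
The plan is to establish differentiability at $x_0$ via a Campanato-type decay estimate, obtained by comparing $u$ with affine approximations extracted from a frozen-coefficient perturbation argument. Continuity of $u$ is already known from \cite{DK17, DEK18}, so the new work lies in the quantitative decay of an affine approximation near $x_0$. After translating so that $x_0=0$, I set $\mathbf A_0 := \mathbf A(0)$, and for each $r \in (0, R_0]$ decompose $u = v_r + w_r$ on $B_r$, where $w_r$ is the distributional solution of
\[
\sum a^{ij}_0 D_{ij} w_r = \dv^2\bigl[(\mathbf A_0 - \mathbf A)u + \mathbf f\bigr] - \dv\bigl[\vec b u - \vec g\bigr] + (h - cu) \quad\text{in } B_r,
\]
with zero boundary data, so that $v_r := u - w_r$ satisfies the constant-coefficient homogeneous equation $\sum a^{ij}_0 D_{ij} v_r = 0$ in $B_r$. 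Since $a^{ij}_0$ is constant, $v_r$ is harmonic after an affine change of variables and is therefore smooth in $B_r$, with the interior estimate $\norm{D^2 v_r}_{L^\infty(B_{r/2})} \lesssim r^{-2}\norm{v_r}_{L^\infty(B_r)}$ and a corresponding quadratic Taylor remainder at $0$.

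The core estimate is a bound on $w_r$ obtained by duality with the non-divergence Dirichlet problem. Given a test function $\varphi$, let $\psi$ solve $L_0 \psi := \sum a^{ij}_0 D_{ij}\psi = \varphi$ in $B_r$ with $\psi = 0$ on $\partial B_r$; the standard theory for constant-coefficient operators on a smooth domain gives scale-invariant bounds on $\norm{\psi}_{L^\infty}$, $r\norm{D\psi}_{L^\infty}$, and $r^2\norm{D^2\psi}_{L^\infty}$ in terms of a suitable norm of $\varphi$. Pairing the equation for $w_r$ against $\psi$ and integrating by parts (valid because $\psi$ vanishes on $\partial B_r$) produces an estimate of the shape
\[
\fint_{B_r}\abs{w_r} \lesssim \omega_{\rm coef}(r)\,\norm{u}_{L^\infty(B_r)} + r\,\omega_{\rm dat}(r).
\]
The critical structural point is that only the $L^\infty$ norm of $u$, and not any derivative, enters the coefficient-perturbation term, matching the hypothesis $u \in L^\infty_{\rm loc}$; the modulus $\omega_{\rm coef}$ naturally packages the contributions from $(\mathbf A - \mathbf A_0)u$, $\vec b u$, and $cu$, while $\omega_{\rm dat}$ packages those from $\mathbf f, \vec g, h$.

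With these two ingredients in hand I set up a dyadic iteration. Define $\phi(r) := r^{-1}\inf_{\ell}\sup_{B_r}\abs{u-\ell}$, where the infimum is over affine functions $\ell$ with $\ell(0)=0$. Combining the Taylor expansion of $v_r$ at $0$ (using $u(0) = 0$ to anchor $v_r(0) = -w_r(0)$) with the above $w_r$-bound, one derives a Campanato-type iteration
\[
\phi(\theta r) \le C\theta\,\phi(r) + C\bigl[\omega_{\rm coef}(r)\bigl(r^{-1}\norm{u}_{L^\infty(B_r)} + \phi(r)\bigr) + \omega_{\rm dat}(r)\bigr]
\]
for some fixed small $\theta \in (0,1)$; since $u(0)=0$, the quotient $r^{-1}\norm{u}_{L^\infty(B_r)}$ remains uniformly bounded along the iteration in terms of the affine slopes that we track. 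Iterating over dyadic scales $r_k := \theta^k R_0$, the Dini summability of $\omega_{\rm coef}$ and $\omega_{\rm dat}$ yields convergence of the error series and produces moduli $\varrho_{\rm coef}, \varrho_{\rm dat}$ in the standard way (cf.\ the Appendix of \cite{HK20}). The approximating affine functions $\ell_{r_k}$ then converge to a unique affine limit whose linear part is $Du(0)$, and the quantitative pointwise bound follows by summing the telescoping series. In the H\"older regime $\omega_{\rm coef}(r) \lesssim r^\alpha$, the series is geometric and returns $\varrho_{\rm coef}(r) \lesssim r^\alpha$. Finally, the estimate on $\abs{Du(0)}$ follows from the single-step gradient bound $\abs{Dv_{R_0}(0)} \lesssim R_0^{-1}\norm{v_{R_0}}_{L^\infty(B_{R_0})}$ combined with the $w_{R_0}$-bound.

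The main technical obstacle is the duality estimate for $w_r$: the adjoint form admits no self-testing energy identity, so all bounds must be routed through the dual non-divergence Dirichlet problem, and the Schauder-type bounds on $\psi$ must be obtained with the correct scaling in $r$ so that no powers are lost (this is where the Dini-mean-oscillation structure of $\mathbf A$ is genuinely used, not merely continuity). A secondary issue is making the decomposition rigorous for merely $L^\infty_{\rm loc}$ solutions: $w_r$ is defined via duality, or equivalently by approximating the data and invoking $W^{2,p}$ theory for the constant-coefficient problem, after which $v_r := u - w_r$ is automatically a distributional solution of the homogeneous frozen equation, hence smooth by hypoellipticity.
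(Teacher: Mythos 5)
Your proposal follows the same skeleton as the paper (freeze the coefficients, bound a corrector by duality with the non-divergence Dirichlet problem, iterate a Campanato-type quantity over affine approximations, sum via the Dini condition), but there are genuine gaps at the two load-bearing steps. First, the claimed corrector bound $\fint_{B_r}\abs{w_r}\lesssim \omega_{\rm coef}(r)\,\norm{u}_{L^\infty(B_r)}+r\,\omega_{\rm dat}(r)$ is an $L^1$ estimate that the duality argument does not deliver: the solution map of the frozen-coefficient adjoint problem is only of weak type $(1,1)$ (this is \cite[Lemma 2.23]{DK17}, extended here as Lemma~\ref{lem01}), and weak $(1,1)$ does not imply $L^1\to L^1$; interpolating the data $(\mathbf A-\bar{\mathbf A})u$ against its trivial $L^\infty$ bound to work in $L^p$, $p>1$, produces $\omega_{\mathbf A}(r)^{1/p}$, which destroys the Dini summability the whole scheme depends on. This is precisely why the paper runs the iteration in the $L^{1/2}$ quasi-norm, with $\varphi(r)$ as in \eqref{eq1100sat}. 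Second, your Campanato quantity $\phi(r)$ is $L^\infty$-based, but no pointwise bound on $w_r$ is available, so the one-step inequality $\phi(\theta r)\le C\theta\,\phi(r)+\cdots$ cannot be derived as stated; and the assertion that $r^{-1}\norm{u}_{L^\infty(B_r)}$ ``remains uniformly bounded along the iteration in terms of the affine slopes'' is exactly the delicate point rather than a byproduct of the iteration. In the paper this requires a separate sup-estimate for $u-\vec a_j\cdot x-b_j$ of DK17 type (Lemma~\ref{lem1146sat}) and the bootstrap Lemma~\ref{lem1548sun}, in which $r_0$ is chosen small so that $\int_0^{r_0}\tilde\omega_{\rm coef}(t)/t\,dt$ falls below a fixed threshold; without this, the term $\omega_{\rm coef}(r)\cdot r^{-1}\norm{u}_{L^\infty(B_r)}$ in the iteration is uncontrolled.

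Two further inaccuracies are fixable but real. Imposing zero Dirichlet data on $w_r$ is not the right boundary condition: testing against $\psi$ with $\psi=0$ on $\partial B_r$ leaves the boundary term involving $(\mathbf F\nu\cdot\nu)(D\psi\cdot\nu)$, since $D\psi\cdot\nu$ does not vanish; the well-posed problem dual to the non-divergence Dirichlet problem carries the conormal-type condition $(\bar{\mathbf A}\nu\cdot\nu)w=\mathbf F\nu\cdot\nu$ used throughout the paper. Also, freezing at $\mathbf A(x_0)$ rather than at the mean $(\mathbf A)_{B_r}$ replaces the mean oscillation $\omega_{\mathbf A}(r)$ by the modulus of continuity at $x_0$, which for DMO coefficients is only controlled by $\int_0^r\omega_{\mathbf A}(t)/t\,dt$ and need not itself satisfy the Dini condition; and since the hypothesis on $\mathbf f$ is $D\mathbf f\in\mathrm{DMO}$, one must subtract the affine part $(\mathbf f)_{B_r}+(D\mathbf f)_{B_r}\cdot x$ (harmless under $\dv^2$) and use the Poincar\'e inequality, as in \eqref{eq1103fri}, to make $r\,\omega_{D\mathbf f}(r)$ appear in $\omega_{\rm dat}$. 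In short, the architecture matches the paper, but as written the central corrector estimate and the choice of iteration norm are not correct, and the uniform control of $r^{-1}\norm{u}_{L^\infty(B_r)}$ across scales, where most of the paper's work lies, is assumed rather than proved.
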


In the next theorem, we assume that $\Omega$ is a bounded $C^{1,\alpha}$ domain for some $\alpha \in (0,1)$, with unit exterior normal vector $\nu$.
We say that $u \in L^1(\Omega)$ is a weak solution (or an adjoint solution) of
\[
L^* u=\dv^2 \mathbf{f}+\dv \vec g+ h\quad\text{in }\;\Omega,\qquad
u=\frac{\mathbf f  \nu \cdot  \nu}{\mathbf A  \nu \cdot  \nu}\quad\text{on }\;\partial\Omega,
\]
if $\vec b u$, $cu \in L^1(\Omega)$, and for all $v\in C^2(\overline \Omega)$ satisfying $v=0$ on $\partial\Omega$, we have
\[
\int_\Omega u Lv=\int_\Omega \tr(\vec f D^2v)-\vec g \cdot Dv+hv.
\]

\begin{theorem}			\label{thm-main02}
Let $\Omega \subset \bR^d$ be a bounded $C^{1,\alpha}$ domain for some $\alpha \in (0,1)$, and assume that the conditions \eqref{ellipticity-nd} --  \eqref{cond_dat} are satisfied.
Let $u \in L^\infty(\Omega)$ be a weak solution of
\[
L^* u=\dv^2 \mathbf{f}+\dv \vec g+ h\quad\text{in }\;\Omega,\qquad
u=\frac{\mathbf f \nu \cdot \nu}{\mathbf A \nu \cdot  \nu}\quad\text{on }\;\partial\Omega.
\]
Suppose that $u(x^o)=0$ for some $x^o \in \partial\Omega$, and that the compatibility condition $\mathbf{f}(x^o)=0$ is satisfied.
Then, $u$ is differentiable at $x^o$.
Furthermore, there exist moduli of continuity $\varrho_{\rm coef}$ and $\varrho_{\rm dat}$, determined by $\omega_{\rm coef}$ and $\omega_{\rm dat}$ in  \eqref{omega_coef} and \eqref{omega_dat}, respectively, as well as by the $C^{1,\alpha}$ properties of $\Omega$, such that for $0<\abs{x-x^o}<\frac14 r_0$, the following estimate holds: 
\[
\frac{\abs{u-Du(x^o)\cdot (x-x^o)}}{\abs{x-x^o}} \le C  \varrho_{\rm coef}(\abs{x-x^o})\left(\frac{1}{r_0} \fint_{B_{4r_0}(x^o) \cap \Omega} \abs{u} + \varrho_{\rm dat}(r_0) \right)+ C \varrho_{\rm dat}(\abs{x-x^o}),
\]
where $C$ and $r_0$ are positive constants depending only on $d$, $\lambda$, $\Lambda$, $\omega_{\rm coef}$, and the $C^{1,\alpha}$ properties of $\Omega$.
Moreover, the derivative of $u$ at $x^o$ satisfies the estimate:
\[
\abs{D u (x^o)} \le C \left( \frac{1}{r_0} \fint_{B_{4r_0}(x^o)\cap \Omega} \abs{u} + \varrho_{\rm dat}(r_0)\right).
\]
\end{theorem}

\begin{remark}			\label{rmk_omega}
Let $\omega_{\rm coef}(r)$ and $\omega_{\rm dat}(r)$ be as defined in \eqref{omega_coef} and \eqref{omega_dat}, respectively.
If $\vec b \in L^{p_0}(\Omega)$, $c\in L^{p_0/2}(\Omega)$, and $h\in L^{p_0}(\Omega)$ for some $p_0>d$, then by H\"older's inequality, we obtain the following estimates:
\begin{align*}
\omega_{\rm coef}(r) &\lesssim  \omega_{\mathbf A}(r) +  r^{1-\frac{d}{p_0}} \norm{\vec b}_{L^{p_0}(\Omega)}+  r^{2-\frac{2d}{p_0}} \norm{c}_{L^{p_0/2}(\Omega)},\\
\omega_{\rm dat}(r) & \lesssim \omega_{D \mathbf f}(r)+\omega_{\vec g}(r) +  r^{1-\frac{d}{p_0}} \norm{h}_{L^{p_0}(\Omega)},
\end{align*}
which implies that both $\omega_{\rm coef}(r)$ and $\omega_{\rm dat}(r)$ satisfy the Dini condition.
The proof of Theorem~\ref{thm-main01} will show that, in order to estimate $Du(x^o)$, it is sufficient to assume that $\mathbf{A}$, $D \mathbf{f}$, and $\vec{g} \in \mathrm{DMO}(\Omega)$, and that $\vec{b}$, $c$, and $h \in L^p_{\rm loc}(\Omega)$ for some $p > 1$, along with the Dini conditions on $\omega_{\rm coef}(r)$ and $\omega_{\rm dat}(r)$, instead of requiring the conditions in \eqref{cond_lower} and \eqref{cond_dat}. Similarly, the proof of Theorem~\ref{thm-main02} shows that it is only necessary to assume these conditions to conclude the result of the theorem.
\end{remark}

The paper is organized as follows: In Section 2, we provide the proof of Theorem \ref{thm-main01} under the assumption that the lower-order coefficients and inhomogeneous terms are zero. The proof of Theorem \ref{thm-main01} for the general case is presented in Section \ref{sec3}. We adopt this two-step approach to convey the main idea of the proof without delving into excessive technicalities. In Section \ref{sec4}, we present a proof for Theorem \ref{thm-main02}. Section \ref{sec4} is divided into two subsections: in Section \ref{sec:flat}, we assume that the boundary portion containing $x^o$ is flat, and in Section \ref{sec:c1alpha}, we address the general case. This structure is designed to present the main ideas first and defer the technicalities to later. The application to estimates for Green's function is presented as Theorem \ref{230321_thm1} in Section \ref{sec5}.

\section{Proof of Theorem~\ref{thm-main01}: A simple case}		\label{sec2}
In this section, we deal with an elliptic operator $L_0^*$ in the form
\[
L_0^* u=  D_{ij}(a^{ij} u)=\dv^2 (\mathbf Au).
\]
Here and throughout, we use the usual summation convention over repeated indices.
We also assume that the inhomogeneous terms $\mathbf f$, $\vec g$, and $h$ are all zero.
These assumptions allow us to present the main idea of the proof more clearly.

By \cite[Theorem~1.10]{DK17}, any locally bounded weak solution of $L_0^* u =0$ in $\Omega$ is continuous in $\Omega$.
We shall show that if $u(x^o)=0$ for some $x^o \in \Omega$, then $u$ is differentiable at $x^o$.
For simplicity, let us assume that $x^o=0$ and $d_{x^o} \ge 1$, so that $B_{1}(0) \subset \Omega$.

\begin{remark}			\label{rmk_rescaling}
This can be achieved through an affine change of coordinates.
Note that the new coefficients $\tilde{\mathbf A}$ still satisfy the Dini mean oscillation condition as well as the uniform ellipticity condition \eqref{ellipticity-nd}.
Moreover, $\omega_{\tilde{\mathbf A}}$ only improves under dilation.
Therefore, the constants that appear below in the proof do not depend on $d_{x^0}$.
\end{remark}

Let us denote by $\bar{\mathbf A}=(\mathbf A)_{B_r}$, the average of $\mathbf A$ over the ball $B_r=B_r(0)$, where $r \in (0,\frac12]$.
Decompose $u$ as $u=v+w$, where $w \in L^p(B_r)$ (for some $p>1$) is the weak solution of the problem
\[
\left\{
\begin{aligned}
\dv^2(\bar{\mathbf A}w) &= -\dv^2((\mathbf{A}-\bar{\mathbf A})u)\;\mbox{ in }\; B_r,\\
(\bar{\mathbf A} \nu \cdot \nu) w&= -(\mathbf{A}-\bar{\mathbf A})u \nu\cdot \nu  \;\mbox{ on }\;\partial B_r.
\end{aligned}
\right.
\]
It follows from \cite[Lemma 2.23]{DK17} that $w \in L^{\frac12}(B_r)$ and
\begin{equation}			\label{eq1622thu}
\left(\fint_{B_r} \abs{w}^{\frac12}\right)^{2} \le C \omega_{\mathbf A}(r) \norm{u}_{L^\infty(B_r)},
\end{equation}
where $C=C(d,\lambda, \Lambda)$.
In fact, we have $w \in L^p(B_r)$ for any $p \in (0,1)$, but we fix $p=\frac12$ for the sake of definiteness.

Note that $v=u-w$ satisfies
\[
\dv^2(\bar{\mathbf A} v)=\tr(\bar{\mathbf A} D^2 v)=0\;\text { in }\;B_r.
\]
By the interior regularity estimates for solutions of elliptic equations with constant coefficients, we have $v \in C^{\infty} (B_r)$.
In particular,
\[
[v]_{C^{1,1}(B_{r/2})} \le \frac{C}{r^2} \left(\fint_{B_r} \abs{v}^{\frac12}\right)^{2},
\]
where $C=C(d,\lambda, \Lambda)$.
Moreover, the above estimate remains valid if $v$ is replaced by $v-\ell$, where $\ell(x)$ is any affine function.
That is,
\[
[v]_{C^{1,1}(B_{r/2})} \le \frac{C}{r^2} \left(\fint_{B_r} \abs{v-\ell}^{\frac12}\right)^{2},
\]
for any $\ell(x)=\vec a \cdot x + b$ with $\vec a \in \bR^d$ and $b \in \bR$.

On the other hand, by virtue of Taylor's formula, for any $\rho \in (0, r]$, we have
\[
\sup_{x\in B_\rho}\;\abs{v(x)-Dv(0)\cdot x - v(0)}  \le C(d) [v]_{C^{1,1}(B_\rho)} \,\rho^2.
\]

Therefore, for any $\kappa \in (0, \frac12)$ and an affine function $\ell(x)=\vec a \cdot x +b$, we obtain
\begin{equation}			\label{eq1616fri}
\left(\fint_{B_{\kappa r}} \abs{v(x)-Dv(0)\cdot x -v(0)}^{\frac12}dx\right)^{2} \le C [v]_{C^{1,1}(B_{r/2})} (\kappa r)^2 \le C \kappa^2 \left(\fint_{B_r} \abs{v-\ell}^{\frac12}\right)^{2},
\end{equation}
where $C=C(d,\lambda, \Lambda)>0$.

Let us define
\begin{equation}			\label{eq1100sat}
\varphi(r):=\frac{1}{r} \,\inf_{\substack{\vec a \in \bR^d\\b \in \bR}} \left(\fint_{B_{r}} \abs{u(x)-\vec a \cdot x -b}^{\frac12}\,dx\right)^{2}.
\end{equation}
Since $u=v+w$, we use the quasi-triangle inequality, \eqref{eq1622thu}, and \eqref{eq1616fri}  to obtain
\begin{align}			\nonumber
\kappa r \varphi(\kappa r) & \le \left(\fint_{B_{\kappa r}} \abs{u(x)-Dv(0)\cdot x -v(0)}^{\frac12}dx\right)^{2}\\					\nonumber
&\le C\left(\fint_{B_{\kappa r}} \abs{v(x)-Dv(0)\cdot x -v(0)}^{\frac12}dx\right)^{2} + C \left(\fint_{B_{\kappa r}} \abs{w(x)}^{\frac12}dx\right)^{2}\\
					\nonumber
& \le  C \kappa^2 \left(\fint_{B_r} \abs{v-\ell}^{\frac12}\right)^{2} + C \kappa^{-2d}\left(\fint_{B_{r}} \abs{w}^{\frac12}\right)^{2}\\	
					\nonumber
& \le C \kappa^2 \left(\fint_{B_r} \abs{u-\ell}^{\frac12}\right)^{2} + C \left(\kappa^2+\kappa^{-2d}\right)\left(\fint_{B_{r}} \abs{w}^{\frac12}\right)^{2}\\
					\label{eq0224sat}
& \le C \kappa^2 \left(\fint_{B_r} \abs{u-\ell}^{\frac12}\right)^{2} + C \left(\kappa^2+\kappa^{-2d}\right) \omega_{\mathbf A}(r) \norm{u}_{L^\infty(B_r)}.
\end{align}

Let $\beta \in (0,1)$ be an arbitrary but fixed number.
With this $\beta$, choose $\kappa=\kappa(d, \lambda,\Lambda, \beta) \in (0, \frac12)$ such that $C \kappa \le  \kappa^{\beta}$.
Then, we have
\begin{equation}			\label{eq1110sat}
\varphi(\kappa r) \le \kappa^\beta \varphi(r) + C \omega_{\mathbf A}(r)\,\frac{1}{r} \norm{u}_{L^\infty(B_r)},
\end{equation}
where $C=C(d, \lambda, \Lambda, \kappa)=C(d, \lambda, \Lambda, \beta)$.

Let $r_0 \in (0, \frac12]$ be a number to be chosen later.
By iterating \eqref{eq1110sat}, we have, for $j=1,2,\ldots$,
\[
\varphi(\kappa^j r_0) \le \kappa^{\beta j} \varphi(r_0) + C \sum_{i=1}^{j} \kappa^{(i-1)\beta} \omega_{\mathbf A}(\kappa^{j-i} r_0)\,\frac{1}{\kappa^{j-i} r_0}\, \norm{u}_{L^\infty(B_{\kappa^{j-i} r_0})}.
\]

By defining
\begin{equation}			\label{eq1244sat}
M_j(r_0):=\max_{0\le i < j}\, \frac{1}{\kappa^i r_0} \,\norm{u}_{L^\infty(B_{\kappa^i r_0})} \quad \text{for }j=1,2, \ldots,
\end{equation}
we obtain
\begin{equation}			\label{eq1114sat}
\varphi(\kappa^j r_0) \le \kappa^{\beta j} \varphi(r_0) + C M_j(r_0) \tilde \omega_{\mathbf A}(\kappa^j r_0),
\end{equation}
where, similar to \cite[(2.15)]{DK17}, we define
\begin{equation}			\label{eq1245sat}
\tilde\omega_{\mathbf A}(t):= \sum_{i=1}^\infty \kappa^{i\beta} \left\{ \omega_{\mathbf A}(\kappa^{-i} t) [ \kappa^{-i} t \le 1] +\omega_{\mathbf A}(1)[\kappa^{-i} t >1]\right\}.
\end{equation}
Here, we use Iverson bracket notation:  $[P] = 1$ if $P$ is true, and $[P] = 0$ otherwise.

We emphasize that $\tilde\omega_{\mathbf A}(t)$ satisfies the Dini condition whenever $\omega_{\mathbf A}(t)$ does.
Specifically, it is evident  $\tilde \omega_{\mathbf A}(t) \lesssim t^\alpha$ if $\mathbf A \in C^\alpha$ for some $\alpha \in (0,1)$.

\begin{remark}				\label{rmk1147}
We observe that $\omega_{\mathbf A}(t) \lesssim \tilde \omega_{\mathbf A}(t)$.
Additionally, we will use the following fact:
\[
\sum_{j=0}^\infty \omega_{\mathbf A}(\kappa^j r) \lesssim \int_0^{r} \frac{\omega_{\mathbf A}(t)}{t}\,dt,
\]
which, in particular, implies that $\omega_{\mathbf A}(r) \lesssim \int_0^r \omega_{\mathbf A}(t)/t\,dt$; see \cite[Lemma 2.7]{DK17}.
\end{remark}

For each fixed $r$, the infimum in \eqref{eq1100sat} is realized by some $\vec a \in \bR^d$ and $b \in \bR$.
For $j=0,1,2,\ldots$, let $\vec a_j \in \bR^d$ and $b_j \in \bR$ be chosen such that
\begin{equation}			\label{eq0203tue}
\varphi(\kappa^j r_0)= \frac{1}{\kappa^j r_0}
\left(\fint_{B_{\kappa^j r_0}} \abs{u(x)-\vec a_j \cdot x -b_j}^{\frac12}dx\right)^{2}.
\end{equation}

From \eqref{eq1100sat} and H\"older's inequality, we have
\begin{equation}			\label{eq0538tue}
\varphi(r_0) \le \frac{1}{r_0} \fint_{B_{r_0}} \abs{u}.
\end{equation}
Combining \eqref{eq1114sat} and \eqref{eq0538tue}, we deduce
\begin{equation}\label{eq4.38}
\varphi(\kappa^j r_0) \le  \frac{\kappa^{\beta j}}{r_0} \fint_{B_{r_0}} \abs{u}+ CM_j(r_0) \tilde \omega_{\mathbf A}(\kappa^j r_0).
\end{equation}

Next, observe that for $j=0,1,2,\ldots$, we have
\begin{equation}\label{eq0218tue}
\fint_{B_{\kappa^j r_0}} \abs{\vec a_j \cdot x +b_j}^{\frac12}dx
\le \fint_{B_{\kappa^j r_0}} \abs{u-\vec a_j \cdot x -b_j}^{\frac12}dx+
\fint_{B_{\kappa^j r_0}} \abs{u}^{\frac12}\le 2\fint_{B_{\kappa^j r_0}} \abs{u}^{\frac12}.
\end{equation}
Furthermore,
\begin{align*}
\abs{b_j}^{\frac12} = \abs{\vec a_j \cdot x + b_j - 2(\vec a_j\cdot x/2 + b_j)}^{\frac12} \le \abs{\vec a_j \cdot x + b_j}^{\frac12} + 2^{\frac12}\, \abs{\vec a_j \cdot x/2 + b_j}^{\frac12}
\end{align*}
and
\[
\fint_{B_{\kappa^j r_0}} \abs{\vec a_j \cdot x/2 + b_j}^{\frac12}dx  = \frac{2^d}{\abs{B_{\kappa^j r_0}}}\int_{B_{\kappa^j r_0/2}} \abs{\vec a_j \cdot x + b_j}^{\frac12} dx \leq 2^d\fint_{B_{\kappa^j r_0}} \abs{\vec a_j\cdot x + b_j}^{\frac12}dx.
\]
Thus,
\begin{equation}		\label{eq0213tue}
\abs{b_j} \le C \left(\fint_{B_{\kappa^j r_0}}\abs{\vec a_j \cdot x + b_j }^{\frac12}dx\right)^{2}\le C \left(\fint_{B_{\kappa^j r_0}} \abs{u}^{\frac12}\right)^{2},\quad j=0,1,2,\ldots.
\end{equation}
Since $u$ is a continuous function vanishing at $0$, \eqref{eq0213tue} immediately implies
\begin{equation}			\label{eq0215tue}
\lim_{j \to \infty} b_j =0.
\end{equation}

\subsection*{Estimate of $\vec a_j$}
By the quasi-triangle inequality, we have
\[
\abs{(\vec a_j - \vec a_{j-1})\cdot x + (b_j -b_{j-1})}^{\frac12} \le \abs{u-\vec a_j \cdot x- b_j}^{\frac12} + \abs{u-\vec a_{j-1} \cdot x- b_{j-1}}^{\frac12}.
\]
Taking the average over $B_{\kappa^j r_0}$ and using the fact that $\abs{B_{\kappa^{j-1} r_0}}/ \abs{B_{\kappa^j r_0}} = \kappa^{-d}$, we obtain
\begin{equation}			\label{eq1949sat}
\frac{1}{\kappa^j r_0}
\left(\fint_{B_{\kappa^j r_0}}\abs{(\vec a_j - \vec a_{j-1})\cdot x + (b_j -b_{j-1})}^{\frac12}dx\right)^{2} \le C \varphi(\kappa^j r_0) + C \varphi(\kappa^{j-1} r_0)
\end{equation}
for $j=1,2,\ldots$, where $C=C(d, \lambda, \Lambda, \beta)$.

Next, observe that
\[
\abs{(b_j-b_{j-1})}^{\frac12} = \abs{(\vec a_j -\vec a_{j-1})\cdot x + (b_j-b_{j-1}) - 2\{(\vec a_j -\vec a_{j-1})\cdot x/2 + (b_j-b_{j-1})\}}^{\frac12}.
\]
Using the quasi-triangle inequality, we obtain
\[
\abs{(b_j-b_{j-1})}^{\frac12} \le \abs{(\vec a_j -\vec a_{j-1})\cdot x + (b_j-b_{j-1})}^{\frac12} + 2^{\frac12} \,\abs{(\vec a_j -\vec a_{j-1})\cdot x/2 + (b_j-b_{j-1})}^{\frac12}.
\]
Moreover,
\[
\fint_{B_{\kappa^j r_0}} \abs{(\vec a_j -\vec a_{j-1})\cdot x/2 + (b_j-b_{j-1})}^{\frac12}dx \leq
2^d\fint_{B_{\kappa^j r_0}} \abs{(\vec a_j -\vec a_{j-1})\cdot x + (b_j-b_{j-1})}^{\frac12}dx.
\]
Combining these estimates, we conclude that
\[
\abs{b_j-b_{j-1}} \le C(d) \left(\fint_{B_{\kappa^j r_0}}\abs{(\vec a_j - \vec a_{j-1})\cdot x + (b_j -b_{j-1})}^{\frac12}dx\right)^{2},\quad j=1,2,\ldots.
\]
Substituting this into \eqref{eq1949sat}, we derive
\begin{equation}				\label{eq2247sat}
\frac{1}{\kappa^j r_0} \abs{b_j - b_{j-1}} \le C \varphi(\kappa^j r_0)+C \varphi(\kappa^{j-1} r_0),\quad j=1,2,\ldots.
\end{equation}

On the other hand, note that for any $\vec a \in \bR^d$, by writing $\vec a= \abs{\vec a} \vec e$, where $\vec e$ is a unit vector, we have
\begin{equation}		\label{eq0229tue}
\fint_{B_r} \abs{\vec a \cdot x}^{\frac12} dx= \abs{\vec a}^{\frac12} \fint_{B_r}  \abs{\vec e \cdot x}^{\frac12}dx = \abs{\vec a}^{\frac12} \fint_{B_1} r^{\frac12} \abs{\vec e \cdot x}^{\frac12}dx =C(d) r^{\frac12} \abs{\vec a}^{\frac12}.
\end{equation}

Then, by using \eqref{eq0229tue}, the quasi-triangle inequality, \eqref{eq1949sat}, \eqref{eq2247sat},  \eqref{eq4.38}, and the observation that $M_{j-1}(r_0) \le M_{j}(r_0)$, we obtain
\begin{align}
						\nonumber
\abs{\vec a_j-\vec a_{j-1}} &=\frac{C}{\kappa^j r_0} \left(\fint_{B_{\kappa^j r_0}}\abs{(\vec a_j - \vec a_{j-1})\cdot x}^{\frac12}dx\right)^{2}\\
						\nonumber
& \le \frac{C}{\kappa^j r_0} \left(\fint_{B_{\kappa^j r_0}}\abs{(\vec a_j - \vec a_{j-1})\cdot x + (b_j -b_{j-1})}^{\frac12}dx\right)^{2} + \frac{C}{\kappa^j r_0} \abs{b_j - b_{j-1}}\\
						\nonumber
&\le C \varphi(\kappa^j r_0)+C \varphi(\kappa^{j-1} r_0)\\
								\label{eq2316sat}
&\le \frac{C\kappa^{\beta j}}{r_0}  \fint_{B_{r_0}} \abs{u}+ CM_j(r_0)\left\{\tilde \omega_{\mathbf A}(\kappa^j r_0) + \tilde \omega_{\mathbf A}(\kappa^{j-1} r_0)\right\}.
\end{align}

To estimate $\abs{\vec a_0}$, we proceed similarly to \eqref{eq2316sat} by applying \eqref{eq0218tue} and \eqref{eq0213tue} with $j=0$, and using H\"older's inequality to obtain
\begin{equation}			\label{eq0230tue}
\abs{\vec a_0} \le \frac{C}{r_0} \fint_{B_{r_0}} \abs{u},
\end{equation}
where $C=C(d, \lambda, \Lambda, \beta)$.

For $k>l\ge 0$, we derive from \eqref{eq2316sat} and the definition of $M_{j}(r_0)$ that
\begin{align}
						\nonumber
\abs{\vec a_k -\vec a_l} \le \sum_{j=l}^{k-1}\, \abs{\vec a_{j+1}-\vec a_j} &\le  \sum_{j=l}^{k-1}  \frac{C\kappa^{\beta(j+1)}}{r_0} \fint_{B_{r_0}} \abs{u}+ CM_k(r_0) \sum_{j=l}^{k} \tilde \omega_{\mathbf A}(\kappa^j r_0) \\
						\label{eq0946tue}
&\le  \frac{C \kappa^{\beta(l+1)}}{(1-\kappa^\beta)r_0} \fint_{B_{r_0}} \abs{u}+ CM_k(r_0) \int_0^{\kappa^l r_0} \frac{\tilde \omega_{\mathbf A}(t)}{t}\,dt,
\end{align}
where we used Remark~\ref{rmk1147}.
In particular, by taking $k=j$ and $l=0$ in \eqref{eq0946tue}, and using \eqref{eq0230tue}, we obtain for $j=1,2,\ldots$ that
\begin{equation}			\label{eq0900tue}
\abs{\vec a_j} \le \abs{\vec a_j-\vec a_0} + \abs{\vec a_0} \le \frac{C}{r_0} \fint_{B_{r_0}} \abs{u}+ CM_j(r_0) \int_0^{r_0} \frac{\tilde \omega_{\mathbf A}(t)}{t}\,dt.
\end{equation}

Similarly, we obtain from \eqref{eq2247sat} that for $k>l\ge 0$, we have
\begin{equation}			\label{eq0947tue}
\abs{b_k - b_l} \le C \frac{\kappa^{(\beta+1)(l+1)}}{1-\kappa^{\beta+1}} \fint_{B_{r_0}} \abs{u}+ C \kappa^l r_0 M_k(r_0) \int_0^{\kappa^l r_0} \frac{\tilde \omega_{\mathbf A}(t)}{t}\,dt.
\end{equation}

\subsection*{Estimate for $b_j$}
We shall derive improved estimates for $\abs{b_j}$ using the following lemma, where we set
\[
v(x):=u(x)-\vec a_j\cdot x-b_j.
\]

\begin{lemma}			\label{lem1702sat}
For $0<r \le \frac12$, we have
\[
\sup_{B_{r}}\, \abs{v} \le C \left\{\left( \fint_{B_{2r}} \abs{v}^{\frac12} \right)^{2} + (r \abs{\vec a_j}+b_j) \int_0^r \frac{\omega_{\mathbf A}(t)}{t}\,dt\right\},
\]
where $C=C(d, \lambda, \Lambda, \omega_{\mathbf A})$.
\end{lemma}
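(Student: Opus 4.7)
The plan is to apply the constant-coefficient approximation used for $u$ at the beginning of the section, now to $v = u - \ell$. The critical observation is that $\dv^2(\bar{\mathbf A}_\rho \ell) = \tr(\bar{\mathbf A}_\rho D^2\ell) = 0$ for any constant matrix $\bar{\mathbf A}_\rho$, since $\ell(x) = \vec a_j\cdot x + b_j$ is affine. Combined with $L_0^* u = 0$, this shows that at every scale $\rho$,
\[
\dv^2(\bar{\mathbf A}_\rho v) = -\dv^2\bigl((\mathbf A - \bar{\mathbf A}_\rho)\,u\bigr)\quad \text{in } B_\rho, \qquad \bar{\mathbf A}_\rho := (\mathbf A)_{B_\rho},
\]
which is structurally identical to the equation satisfied by the ``bad'' part $w$ in the decomposition $u = v + w$ at the beginning of the section. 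The key point is that the affine correction $\ell$ is absorbed entirely into the right-hand side.

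Following that template, for each dyadic radius $\rho_k = 2^{1-k} r$ ($k = 0, 1, 2, \ldots$), I would decompose $v = v_1^{\rho_k} + v_2^{\rho_k}$ on $B_{\rho_k}$, where $v_2^{\rho_k}$ solves the adjoint Dirichlet problem with the above right-hand side (and boundary flux absorbing the jump), and $v_1^{\rho_k} = v - v_2^{\rho_k}$ satisfies $\tr(\bar{\mathbf A}_{\rho_k} D^2 v_1^{\rho_k}) = 0$ in $B_{\rho_k}$. By \cite[Lemma 2.23]{DK17} together with $\|u\|_{L^\infty(B_{\rho_k})} \le \|v\|_{L^\infty(B_{\rho_k})} + \rho_k|\vec a_j| + |b_j|$,
\[
\Bigl(\fint_{B_{\rho_k}}|v_2^{\rho_k}|^{1/2}\Bigr)^{2} \le C\,\omega_{\mathbf A}(\rho_k)\bigl(\|v\|_{L^\infty(B_{\rho_k})} + \rho_k|\vec a_j|+|b_j|\bigr),
\]
while interior regularity for the constant-coefficient non-divergence equation gives $\|v_1^{\rho_k}\|_{L^\infty(B_{\rho_k/2})} \le C(\fint_{B_{\rho_k}}|v_1^{\rho_k}|^{1/2})^2$. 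Upgrading the $L^{1/2}$ bound for $v_2^{\rho_k}$ to an $L^\infty$ bound via a subsidiary application of the $L^\infty$ estimate from \cite[Theorem 1.10]{DK17} to $v_2^{\rho_k}$ as an adjoint solution of its own equation, I would obtain the recursive inequality
\[
\|v\|_{L^\infty(B_{\rho_{k+1}})} \le C\Bigl(\fint_{B_{\rho_k}}|v|^{1/2}\Bigr)^{2} + C\,\omega_{\mathbf A}(\rho_k)\bigl(\|v\|_{L^\infty(B_{\rho_k})} + \rho_k|\vec a_j|+|b_j|\bigr).
\]

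I would then iterate this inequality across $k$ and sum. By Remark~\ref{rmk1147}, $\sum_k \omega_{\mathbf A}(\rho_k) \lesssim \int_0^r \omega_{\mathbf A}(t)/t\,dt < \infty$, so the data errors telescope into $(r|\vec a_j|+|b_j|)\int_0^r \omega_{\mathbf A}(t)/t\,dt$; each factor $(\fint_{B_{\rho_k}}|v|^{1/2})^2$ is controlled by $(\fint_{B_{2r}}|v|^{1/2})^2$ since $B_{\rho_k}\subseteq B_{2r}$; and the tail $\omega_{\mathbf A}(\rho_k)\|v\|_{L^\infty(B_{\rho_k})}$ is absorbed into the left-hand side once $\omega_{\mathbf A}(\rho_k)$ becomes small enough, which is ensured by the Dini condition on $\omega_{\mathbf A}$ and is exactly where the dependence of the constant $C$ on $\omega_{\mathbf A}$ enters. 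The main obstacle is precisely this two-stage absorption: upgrading the $L^{1/2}$ bound for $v_2^{\rho_k}$ to the pointwise $L^\infty$ bound required in the recursion, and closing the iteration for $\|v\|_{L^\infty}$ using only the Dini summability of $\omega_{\mathbf A}$ rather than pointwise smallness at a fixed scale.
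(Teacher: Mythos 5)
There is a genuine gap, and it sits exactly where you flag ``the main obstacle'': the upgrade of the $L^{1/2}$ bound for the bad part $v_2^{\rho_k}$ to an $L^\infty$ bound carrying the small factor $\omega_{\mathbf A}(\rho_k)$. No such single-scale estimate is available, and in fact it cannot be true: if one had $\norm{v_2^{\rho}}_{L^\infty(B_{\rho/2})}\le C\,\omega_{\mathbf A}(\rho)\bigl(\norm{v}_{L^\infty(B_\rho)}+\rho\abs{\vec a_j}+\abs{b_j}\bigr)$, then combining it with the interior estimate for the constant-coefficient part would give a local maximum principle for adjoint solutions whenever $\omega_{\mathbf A}(\rho)\to 0$, i.e.\ for merely uniformly continuous $\mathbf A$ --- contradicting the counterexamples recalled in the introduction (unbounded solutions of $\dv^2(\mathbf A u)=0$ with uniformly continuous $\mathbf A$). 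The proposed ``subsidiary application of \cite[Theorem 1.10]{DK17} to $v_2^{\rho_k}$'' does not repair this: the right-hand side of the equation for $v_2^{\rho_k}$ is $\dv^2\bigl((\mathbf A-\bar{\mathbf A}_{\rho_k})u\bigr)$, and $(\mathbf A-\bar{\mathbf A}_{\rho_k})u$ is not known to be of Dini mean oscillation (the modulus of continuity of $u$ available at this stage is not Dini), and even if it were, the resulting bound would contain a data term of the form $\int_0^{\rho_k}\omega_{(\mathbf A-\bar{\mathbf A})u}(t)\,t^{-1}dt$ with no $\omega_{\mathbf A}(\rho_k)$ smallness, so neither the absorption of $\norm{v}_{L^\infty}$ nor the summation into $(r\abs{\vec a_j}+\abs{b_j})\int_0^r\omega_{\mathbf A}(t)\,t^{-1}dt$ goes through. (A secondary problem: your recursion bounds $\norm{v}_{L^\infty(B_{\rho_{k+1}})}$ by $\norm{v}_{L^\infty(B_{\rho_k})}$ on a \emph{larger} ball, so even granted the inequality, the absorption at the top scale $k=0$, which is what the lemma asserts, is not immediate.)

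Your starting point is the right one and matches the paper: treat the affine part as data, i.e.\ $\dv^2(\mathbf A v)=\dv^2\mathbf F$ with $\mathbf F=-\mathbf A(\vec a_j\cdot x+b_j)$, and exploit that subtracting $\mathbf L=-\bar{\mathbf A}(\vec a_j\cdot x+b_j)$ (which satisfies $\dv^2\mathbf L=0$) leaves a datum $\mathbf F-\mathbf L=-(\mathbf A-\bar{\mathbf A})(\vec a_j\cdot x+b_j)$ whose mean oscillation is $\lesssim\omega_{\mathbf A}(t)\,(r\abs{\vec a_j}+\abs{b_j})$. But the paper then does \emph{not} run a single-center dyadic $L^\infty$ recursion; it performs the decomposition at every point $x_0\in B_{3r/2}$ and every scale $t\le r/4$, keeps only the $L^{1/2}$ smallness of the bad part, and replicates the multi-scale oscillation-decay (Campanato-type) iteration of \cite[Theorem 1.10]{DK17}. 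In that scheme the $L^\infty$ bound on $v$ emerges from summing the per-scale errors $\omega_{\mathbf A}(\kappa^i t)$ at each point --- which is where the Dini integral $(r\abs{\vec a_j}+\abs{b_j})\int_0^r\omega_{\mathbf A}(t)\,t^{-1}dt$ comes from --- and an $L^\infty$ bound on the bad part at a fixed scale is never needed. To fix your argument you would have to abandon the pointwise upgrade of $v_2^{\rho_k}$ and run this point-centered iteration instead.
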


\begin{proof}
Denote
\[
\mathbf F(x)=-\mathbf A(x) (\vec a_j\cdot x+b_j).
\]
Since $\dv^2(\mathbf{A} u)=0$ in $B_1$, we have
\[
\dv^2(\mathbf A v) = \dv^2 \mathbf F  \quad\text{in}\quad B_{2r},
\]
for $0<r \le \frac12$.
For $x_0 \in B_{3r/2}$ and $0<t \le r/4$, we decompose $v$ as $v=v_1+v_2$, where $v_1 \in L^{p}(B_t(x_0))$ (for some $p>1$) is the weak solution of the problem
\[
\left\{
\begin{aligned}
\dv^2(\bar{\mathbf A}v_1) &= \dv^2 \left(\mathbf F- \mathbf L-(\mathbf{A}- \bar{\mathbf A})v\right)\;\text{ in }\;B_t(x_0),\\
(\bar{\mathbf A}\nu\cdot \nu)v_1&=\left(\mathbf F-\mathbf L-(\mathbf{A}- \bar{\mathbf A}) v\right)\nu\cdot \nu \;\text{ on }\;\partial B_t(x_0),
\end{aligned}
\right.
\]
where $\bar{\mathbf A}=(\mathbf A)_{B_t(x_0)}$ and
\[
\mathbf L(x)=-\bar{\mathbf A} (\vec a_j\cdot x+b_j).
\]

By \cite[Lemma~2.23]{DK17} and following \cite[pp. 422--423]{DK17}, we obtain
\[
\left(\fint_{B_t(x_0)} \abs{v_1}^{\frac12} \right)^{2}
\le C \left(\fint_{B_t(x_0)}\abs{\mathbf A-\bar{\mathbf A}}\right) (r\abs{\vec a_j}+\abs{b_j})
+C \left(\fint_{B_t(x_0)}\abs{\mathbf A-\bar{\mathbf A}}\right)\norm{v}_{L^\infty(B_t(x_0))},
\]
and thus, we have
\[
\left(\fint_{B_t(x_0)} \abs{v_1}^{\frac12} \right)^{2} \le C \omega_{\mathbf A}(t)(r\abs{\vec a_j}+\abs{b_j})+ C  \omega_{\mathbf A}(t)\norm{v}_{L^\infty(B_t(x_0))}.
\]

On the other hand, observe that $\dv^2 \mathbf L=0$, and thus $v_2=v-v_1$ satisfies
\[
\dv^2(\bar{\mathbf A}v_2) = 0 \quad\text{in}\quad  B_t(x_0).
\]
We note that $v_2$ satisfies the same interior estimate as appearing in the proof of \cite[Theorem 1.10]{DK17}.
The rest of proof remains the same as that of \cite[Theorem 1.10]{DK17}.
\end{proof}

By Lemma \ref{lem1702sat}, \eqref{eq0203tue}, and \eqref{eq4.38}, we have
\begin{multline}				\label{eq0934wed0}
 \norm{v}_{L^\infty(B_{\frac12 \kappa^j r_0})} \le C\kappa^{(1+\beta)j} \fint_{B_{r_0}} \abs{u} + C\kappa^j r_0 M_j(r_0) \tilde\omega_{\mathbf A}(\kappa^j r_0)\\
+ C \left(\kappa^j r_0\abs{\vec a_j}+\abs{b_j}\right) \int_0^{\kappa^j r_0} \frac{\omega_{\mathbf A}(t)}{t}\,dt,\quad j=1,2,\ldots,
\end{multline}
where $C=C(d, \lambda, \Lambda, \omega_{\mathbf A}, \beta)$.

In particular, from \eqref{eq0934wed0}, we infer that
\begin{multline*}
\abs{b_j}=\abs{v(0)} \le C\kappa^{(1+\beta)j} \fint_{B_{r_0}} \abs{u} + C\kappa^j r_0 M_j(r_0) \tilde\omega_{\mathbf A}(\kappa^j r_0)\\
+C \abs{\vec a_j}\kappa^j r_0 \int_0^{\kappa^j r_0} \frac{\omega_{\mathbf A}(t)}{t}\,dt
+C \abs{b_j} \int_0^{\kappa^j r_0} \frac{\omega_{\mathbf A}(t)}{t}\,dt.
\end{multline*}
Let us fix $r_1>0$ so that
\begin{equation}			\label{eq0903tue}
C \int_0^{r_1} \frac{\omega_{\mathbf A}(t)}{t}\,dt\le \frac12.
\end{equation}
Observe that $r_1$ depends solely on $d$, $\lambda$, $\Lambda$, $\omega_{\mathbf A}$, and $\beta$.
Note that we have not yet chosen $r_0 \in (0,\frac12]$.
We shall require $r_0 \le r_1$ so that we have
\[
\abs{b_j} \le  C\kappa^{(1+\beta)j} \fint_{B_{r_0}} \abs{u} + C\kappa^j r_0 M_j(r_0) \tilde\omega_{\mathbf A}(\kappa^j r_0)
+C\abs{\vec a_j}\kappa^j r_0\int_0^{\kappa^j r_0} \frac{\omega_{\mathbf A}(t)}{t}\,dt.
\]
This, together with \eqref{eq0900tue} and Remark~\ref{rmk1147}, yields
\begin{equation}			\label{eq7.51}
\abs{b_j} \le C \kappa^{j} r_0 \left\{\kappa^{\beta j}+\int_0^{\kappa^j r_0} \frac{\omega_{\mathbf A}(t)}{t}\,dt\right\}\frac{1}{r_0} \fint_{B_{r_0}} \abs{u} +C\kappa^j r_0 M_j(r_0) \int_0^{\kappa^j r_0} \frac{\tilde \omega_{\mathbf A}(t)}{t}\,dt.
\end{equation}

\subsection*{Convergence of $\vec a_j$}
By \eqref{eq0934wed0}, \eqref{eq0900tue}, \eqref{eq7.51}, and Remark~\ref{rmk1147}, we have
\begin{multline}				\label{eq1920thu}
\norm{u-\vec a_j\cdot x-b_j}_{L^\infty(B_{\frac12 \kappa^j r_0})}
\le C \kappa^j r_0 \left\{\kappa^{\beta j}+ \int_0^{\kappa^j r_0} \frac{\omega_{\mathbf A}(t)}{t}\,dt \right\}\frac{1}{r_0} \fint_{B_{r_0}} \abs{u}\\
+C\kappa^j r_0 M_j(r_0)  \int_0^{\kappa^j r_0} \frac{\tilde \omega_{\mathbf A}(t)}{t}\,dt.
\end{multline}
Then, from \eqref{eq1920thu}, \eqref{eq0900tue}, \eqref{eq7.51}, and Remark~\ref{rmk1147}, we infer that
\begin{equation}				\label{eq1555sun}
\frac {1}{\kappa^{j} r_0}\norm{u}_{L^\infty(B_{\frac12 \kappa^j r_0})}
\le \frac C {r_0} \fint_{B_{r_0}} \abs{u} + C M_j(r_0) \int_0^{r_0} \frac{\tilde \omega_{\mathbf A}(t)}{t}\,dt,
\end{equation}
where $C=C(d, \lambda, \Lambda, \omega_{\mathbf A}, \beta)$.

\begin{lemma}				\label{lem1548sun}
There exists a sufficiently small number $r_0=r_0(d, \lambda, \Lambda, \omega_{\mathbf A}, \beta) \in (0,\frac12)$ such that
\begin{equation}			\label{eq1951thu}
\sup_{j \ge 1} M_j(r_0) =\sup_{i \ge 0} \frac{1}{\kappa^i r_0} \,\norm{u}_{L^\infty(B_{\kappa^i r_0})} \le \frac{C}{r_0} \fint_{B_{2r_0}} \abs{u},
\end{equation}
where $C=C(d, \lambda, \Lambda, \omega_{\mathbf A}, \beta)$.
\end{lemma}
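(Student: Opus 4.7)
The plan is to prove \eqref{eq1951thu} by induction, absorbing the term $M_j(r_0)$ appearing on the right-hand side of \eqref{eq1555sun} into itself after taking $r_0$ sufficiently small. To unify notation, set
\[
X_i := \frac{1}{\kappa^i r_0}\,\norm{u}_{L^\infty(B_{\kappa^i r_0})},
\]
so that $M_j(r_0)=\max_{0\le i<j} X_i$, and the claim is equivalent to $\sup_{i\ge 0} X_i \le C r_0^{-1}\fint_{B_{2r_0}}\abs{u}$.

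I would first handle the base case $i=0$ via the interior $L^\infty$-$L^1$ estimate for adjoint solutions with DMO coefficients: the decomposition argument from the proof of \cite[Theorem~1.10]{DK17}, combined with \cite[Lemma~2.23]{DK17}, yields $\norm{u}_{L^\infty(B_{r_0})}\le C\fint_{B_{2r_0}}\abs{u}$ and hence $X_0 \le Cr_0^{-1}\fint_{B_{2r_0}}\abs{u}$. For $i\ge 2$, since $\kappa\le\tfrac12$, the inclusion $B_{\kappa^i r_0}\subset B_{\frac12\kappa^{i-1}r_0}$ gives
\[
X_i \;\le\; \frac{1}{\kappa}\cdot \frac{1}{\kappa^{i-1}r_0}\,\norm{u}_{L^\infty(B_{\frac12\kappa^{i-1}r_0})},
\]
and applying \eqref{eq1555sun} at index $i-1\ge 1$ produces
\[
X_i \;\le\; \frac{C}{\kappa r_0}\fint_{B_{r_0}}\abs{u} \;+\; \frac{C}{\kappa}\,M_{i-1}(r_0)\int_0^{r_0}\frac{\tilde\omega_{\mathbf A}(t)}{t}\,dt.
\]
The intermediate case $i=1$ reduces to the base case by $X_1\le\kappa^{-1}X_0$, which follows from $B_{\kappa r_0}\subset B_{r_0}$.

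To close the induction, I would choose $r_0=r_0(d,\lambda,\Lambda,\omega_{\mathbf A},\beta)\in (0,r_1]$ small enough that $\frac{C}{\kappa}\int_0^{r_0}\tilde\omega_{\mathbf A}(t)/t\,dt\le\tfrac12$; this is possible because $\tilde\omega_{\mathbf A}$ satisfies the Dini condition by Remark~\ref{rmk1147}. Setting $A:=C^* r_0^{-1}\fint_{B_{2r_0}}\abs{u}$ for a sufficiently large constant $C^*$ (chosen to dominate the base case and the inequality $\fint_{B_{r_0}}\abs{u}\le 2^d\fint_{B_{2r_0}}\abs{u}$), a routine induction on $i$ yields $X_i\le A$ for every $i\ge 0$, which is \eqref{eq1951thu}.

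The main obstacle is the combined absorption and index shift: \eqref{eq1555sun} controls the $L^\infty$ norm on the \emph{half-sized} ball $B_{\frac12\kappa^j r_0}$ rather than on $B_{\kappa^j r_0}$, and its right-hand side involves the very quantity $M_j(r_0)$ one wishes to bound. The Dini property of $\tilde\omega_{\mathbf A}$ supplies the needed smallness to absorb $M_{i-1}(r_0)$, but one must start the induction at $i=0,1$ with the interior $L^\infty$-$L^1$ estimate rather than with \eqref{eq1555sun}, which is only valid for $j\ge 1$.
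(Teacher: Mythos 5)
Your proposal is correct and follows essentially the same route as the paper: the same index shift from $B_{\kappa^i r_0}\subset B_{\frac12\kappa^{i-1}r_0}$ to apply \eqref{eq1555sun}, the same base cases $i=0,1$ via the $L^\infty$--$L^1$ estimate of \cite[Theorem~1.10]{DK17}, and the same absorption of $M_{i-1}(r_0)$ by choosing $r_0\le r_1$ small enough that the Dini integral of $\tilde\omega_{\mathbf A}$ is below $\tfrac12$. The only difference is cosmetic bookkeeping (a direct induction on $i$ versus the paper's recursion $M_{j+1}=\max(M_j,c_j)$ with even/odd indices).
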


\begin{proof}
We shall denote
\[
\quad c_i= \frac{1}{\kappa^i r_0} \norm{u}_{L^\infty(B_{\kappa^i r_0})},\quad i=0,1,2,\ldots.
\]
Then, by \eqref{eq1244sat}, we clearly have
\begin{equation}			\label{eq0906tue}
M_1(r_0)=c_0\quad\text{and}\quad
M_{j+1}(r_0)=\max(M_j(r_0), c_j),\quad j=1,2,\ldots.
\end{equation}
Recall that we have already chosen $\kappa=\kappa(d, \lambda, \Lambda, \beta) \in (0,\frac12)$, and thus, from \eqref{eq1555sun}, we conclude that there exists a constant $C=C(d, \lambda, \Lambda, \omega_{\mathbf A}, \beta)>0$ such that
\begin{equation}			\label{eq0848tue}
c_{j+1} \le C \left\{\frac{1}{r_0} \fint_{B_{r_0}}\abs{u}+ M_j(r_0) \int_0^{r_0} \frac{\tilde \omega_{\mathbf A}(t)}{t}\,dt \right\},\quad j=1,2,\ldots.
\end{equation}
By \cite[Theorem 1.10]{DK17}, we have
\begin{equation}			\label{eq0849tue}
c_1 =\frac{1}{\kappa r_0} \norm{u}_{L^\infty(B_{\kappa r_0})} \le \frac{1}{\kappa r_0} \norm{u}_{L^\infty(B_{r_0})} =\frac{1}{\kappa} c_0  \le \frac{C}{r_0} \fint_{B_{2r_0}} \abs{u},
\end{equation}
where $C=C(d, \lambda, \Lambda, \omega_{\mathbf A},\beta)$.

From \eqref{eq0848tue} and \eqref{eq0849tue}, we see that there is $\gamma=\gamma(d, \lambda, \Lambda,\omega_{\mathbf A}, \beta)>0$ such that
\[
c_0,\, c_1 \le \frac{\gamma}{r_0} \fint_{B_{2r_0}}\abs{u} \quad \text{and}\quad c_{j+1} \le \gamma \left\{\frac{1}{r_0} \fint_{B_{2r_0}}\abs{u}+ M_j(r_0) \int_0^{r_0} \frac{\tilde \omega_{\mathbf A}(t)}{t}\,dt \right\},\;\; j=1,2,\ldots.
\]

Now, we fix a number $r_0 \in (0,\frac12]$ such that
\[
\gamma \int_0^{r_0} \frac{\tilde \omega_{\mathbf A}(t)}{t}\,dt \le \frac12
\]
and also $r_0 \le r_1$, where $r_1$ is in \eqref{eq0903tue}.
Then, we have
\begin{equation}			\label{eq2114mon}
c_0, c_1 \le \frac{\gamma}{r_0} \fint_{B_{2r_0}}\abs{u} \quad \text{and}\quad c_{j+1} \le \frac{\gamma}{r_0} \fint_{B_{2r_0}}\abs{u} +\frac12 M_j(r_0),\;\; j=1,2,\ldots.
\end{equation}

By induction, it follows form \eqref{eq0906tue} and \eqref{eq2114mon} that
\[
c_{2k},\,c_{2k+1},\,M_{2k+1}(r_0),\,M_{2k+2}(r_0) \le \frac{\gamma}{r_0} \fint_{B_{2r_0}}\abs{u}\, \sum_{i=0}^{k} \frac{1}{2^i},\quad k=0,1,2,\ldots,
\]
and the lemma follows.
\end{proof}

\begin{remark}			\label{rmk1203tue}
The proof of the previous lemma actually shows that for $0<R \le r_0$, we have
\[
\sup_{0<r\le R} \,\frac{1}{r}  \norm{u}_{L^\infty(B_r)} \le \frac{C}{R} \fint_{B_{2R}} \abs{u},
\]
where $C=C(d, \lambda, \Lambda, \omega_{\mathbf A})$.
To see this, we can fix $\kappa \in (0,\frac12)$ corresponding to some $\beta\in (0,1)$, say $\beta=\frac12$, repeat the same argument with $R$ in place of $r_0$, and use the following inequality:
\[
\frac{1}{\kappa^{j}R} \norm{u}_{L^\infty(B_{\kappa^{j+1} R})} \le \sup_{\kappa^{j+1}R \le r \le \kappa^j R}  \frac{1}{r} \norm{u}_{L^\infty(B_{r})} \le \frac{1}{\kappa^{j+1}R} \norm{u}_{L^\infty(B_{\kappa^j R})}.
\]
\end{remark}

Now, Lemma \ref{lem1548sun} and \eqref{eq0946tue} imply that the sequence $\set{\vec a_j}$ is a Cauchy sequence in $\bR^d$, and thus $\vec a_j \to \hat{\vec a}$ for some $\hat{\vec a} \in \bR^d$.
Moreover, by taking the limit as $k\to \infty$ in \eqref{eq0946tue} and \eqref{eq0947tue} (while recalling \eqref{eq1951thu} and \eqref{eq0215tue}), respectively, and then setting $l=j$, we obtain the following estimates:
\begin{equation}	\label{eq1806sun}
\begin{aligned}
\abs{\vec a_j-\hat{\vec a}} &\le C \left\{ \kappa^{\beta j}+ \int_0^{\kappa^j r_0} \frac{\tilde \omega_{\mathbf A}(t)}{t}\,dt\right\}\frac{1}{r_0} \fint_{B_{2r_0}} \abs{u},\\
\abs{b_j}  &\le C \kappa^j \left\{\kappa^{\beta j} +\int_0^{\kappa^j r_0} \frac{\tilde \omega_{\mathbf A}(t)}{t}\,dt \right\}\fint_{B_{2r_0}} \abs{u}.
\end{aligned}
\end{equation}

By the triangle inequality, \eqref{eq1920thu}, \eqref{eq1806sun}, and Remark~\ref{rmk1147}, we obtain
\begin{align}			\nonumber
\norm{u-\hat{\vec a}\cdot x}_{L^\infty(B_{\frac12 \kappa^j r_0})}& \le \norm{u- \vec a_j \cdot x - b_j}_{L^\infty(B_{\frac12 \kappa^j r_0})} + \frac{\kappa^j r_0}{2} \abs{\vec a_j - \hat{\vec a}} +  \abs{b_j}\\
					\label{eq2221sun}
&\le C \kappa^j r_0\left\{\kappa^{\beta j}+\int_0^{\kappa^j r_0} \frac{\tilde\omega_{\mathbf A}(t)}{t}\,dt\right\} \frac{1}{r_0} \fint_{B_{2r_0}} \abs{u}.
\end{align}

\subsection*{Conclusion}
It follows from \eqref{eq2221sun} that
\begin{equation}		\label{eq1036wed}
\frac{1}{r} \norm{u-\hat{\vec a}\cdot x}_{L^\infty(B_r)}  \le \varrho_{\mathbf A}(r) \left( \frac{1}{r_0} \fint_{B_{2r_0}} \abs{u} \right),
\end{equation}
where
\begin{equation}			\label{eq2143sat}
\varrho_{\mathbf A}(r)=C\left\{\left(\frac{2r}{\kappa r_0}\right)^\beta+\int_0^{2r/\kappa} \frac{\tilde\omega_{\mathbf A}(t)}{t}\,dt\right\}.
\end{equation}
Note that $\varrho_{\mathbf A}$ is a modulus of continuity  determined by $d$, $\lambda$, $\Lambda$, $\omega_{\mathbf A}$, and $\beta \in (0,1)$.

In particular, we conclude from \eqref{eq1036wed} that $u$ is differentiable at $0$.
Moreover, it follows from \eqref{eq0900tue} and \eqref{eq1951thu} that (noting that $Du(0)=\hat{\vec a}=\lim_{j\to \infty} \vec a_j$) we have
\[
\abs{D u (0)} \le \frac{C}{r_0} \fint_{B_{2r_0}} \abs{u},
\]
where $C=C(d, \lambda, \Lambda, \omega_{\mathbf A},\beta)$. \qed

\begin{remark}
From \eqref{eq2143sat}, we observe that when $\mathbf A \in C^{\alpha}$ for some $\alpha \in (0,1)$, we have $\varrho_{\mathbf A}(r) \lesssim r^\alpha$ by choosing $\beta=\alpha$.
This leads to a pointwise $C^{1,\alpha}$ estimate.
\end{remark}

\section{Proof of Theorem~\ref{thm-main01}: General case}	\label{sec3}
We now proceed to prove Theorem~\ref{thm-main01} in the general setting.
Let $u$ be a locally bounded function that satisfies
\[
L^* u=\dv^2 \mathbf{f}+\dv \vec g+ h\quad\text{in }\;\Omega.
\]

By \cite[Theorem 1.8]{DEK18}, we know that $u$ is continuous in $\Omega$ (see also \cite[Proposition 2.21]{DEK18}). Suppose that $u(x^o) = 0$ for some $x^o \in \Omega$.
As in Section \ref{sec2}, we can assume that $x^o = 0$, and that  $d_{x^o} \geq 2$.
This implies that $B_2(0) \subset \Omega$ (see Remark~\ref{rmk_rescaling}).

For $0<r \le \frac12$, we define $\widehat {\mathbf f}=(\widehat f^{ij})$ by
\[
\widehat f^{ij}(x)=f^{ij}(x)-{(f^{ij})}_{B_r}-{({D f^{ij}})}_{B_r}\cdot x.
\]
Note that $\int_{B_r} \widehat f^{ij} =0$.
By the Poincar\'e inequality, we then have
\begin{equation}			\label{eq1103fri}
\fint_{B_r} \abs{\widehat f^{ij}} \le C r \fint_{B_r} \abs{D \widehat f^{ij}} = C r \fint_{B_r} \abs{D f^{ij}- ({D f^{ij}})_{B_r}} \le C r \omega_{D \mathbf f}(r).
\end{equation}

Denote by $\bar{\mathbf A}$ and $\bar{\vec g}$ the averages of $\mathbf A$ and $\vec g$ over the ball $B_r$, respectively.
We decompose $u$ as $u=v+w$, where $w$ is the $L^p$ weak solution (for some $p>1$) of the problem
\[
\left\{
\begin{aligned}
\dv^2( \bar{\mathbf A} w) &= \dv^2 (\widehat{\mathbf f}-(\mathbf{A}-\bar{\mathbf A})u)+\dv (\vec b u + \vec g -\bar{\vec g})+h-cu\;\mbox{ in }\; B_r,\\
(\bar{\mathbf A} \nu \cdot \nu) w&= (\widehat{\mathbf f}-(\mathbf{A}-\bar{\mathbf A})u) \nu \cdot \nu \;\mbox{ on }\;\partial B_r.
\end{aligned}
\right.
\]

By Lemma \ref{lem01} below, which is an extension of \cite[Lemma 2.23]{DK17}, we obtain the following estimate via rescaling:
\begin{multline}			\label{eq1020fri}
\left(\fint_{B_r} \abs{w}^{\frac12}\,dx\right)^{2} \le C r \omega_{D \mathbf f}(r)+C\omega_{\mathbf A}(r) \norm{u}_{L^\infty(B_r)}+C\left(r \fint_{B_r} \abs{\vec b}\right)\norm{u}_{L^\infty(B_r)}\\
+ Cr\omega_{\vec g}(r) + Cr^2 \fint_{B_r} \abs{h} + C \left(r^2 \fint_{B_r} \abs{c}\right) \norm{u}_{L^\infty(B_r)},
\end{multline}
where \eqref{eq1103fri} has been used.

\begin{lemma}			\label{lem01}
Let $B=B_1(0)$.
Let $\mathbf A_0$ be a constant symmetric matrix satisfying the condition \eqref{ellipticity-nd}.
For $\mathbf f \in L^p(B)$, $\vec g \in L^p(B)$, and $h \in L^p(B)$, for some $p>1$, let $u \in L^p(B)$ be the weak solution to the problem
\[
\left\{
\begin{aligned}
\dv^2(\mathbf A_0 u)&= \dv^2 \mathbf f+ \dv \vec g + h\;\mbox{ in }\; B,\\
(\mathbf A_0 \nu \cdot \nu) u &= \mathbf f \nu\cdot \nu \;\mbox{ on } \; \partial B.
\end{aligned}
\right.
\]
Then for any $t>0$, we have
\[
\Abs{\set{x \in B : \abs{u(x)} > t}}  \le \frac{C}{t} \left(\int_{B} \abs{\mathbf f}+ \abs{\vec g}+ \abs{h} \right),
\]
where $C=C(d, \lambda, \Lambda)$.
\end{lemma}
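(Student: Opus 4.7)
The plan is to prove the weak-type $(1,1)$ estimate by representing $u$ via the Green's function of the non-divergence-form operator $L_0 v := \tr(\mathbf{A}_0 D^2 v)$ on $B$ with homogeneous Dirichlet data, and then invoking the classical weak-type $(1,1)$ theory for Calder\'on--Zygmund and Riesz-potential operators.

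Let $G(x,y)$ denote the Green's function for the Dirichlet problem $L_0 v = \phi$ in $B$, $v|_{\partial B} = 0$. Since $\mathbf{A}_0$ is constant, symmetric, and uniformly elliptic, $G$ differs from the explicit free-space fundamental solution of $L_0$ by a smooth regular part; consequently
\[
|G(x,y)| \lesssim |x-y|^{2-d}, \qquad |D_y G(x,y)| \lesssim |x-y|^{1-d}, \qquad |D^2_y G(x,y)| \lesssim |x-y|^{-d}
\]
(with the usual logarithmic replacement in the first bound when $d=2$), the latter also satisfying the standard cancellation and H\"older-regularity conditions of a Calder\'on--Zygmund kernel. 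Testing the weak formulation against $v_\varepsilon$ solving $L_0 v_\varepsilon = \rho_\varepsilon(\cdot - x)$ in $B$ with $v_\varepsilon = 0$ on $\partial B$, and passing to the limit $\varepsilon \to 0^+$, should yield the a.e.\ representation
\[
u(x) = \mathrm{p.v.}\!\int_B \tr\bigl(\mathbf{f}(y) D^2_y G(x,y)\bigr) dy - \int_B \vec{g}(y) \cdot D_y G(x,y) dy + \int_B h(y) G(x,y) dy =: T_{\mathbf f}(x) + T_{\vec g}(x) + T_h(x).
\]

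Each of the three operators is of weak-type $(1,1)$ from $L^1(B)$ to $L^{1,\infty}(B)$. The kernels $G(x,y)$ and $D_y G(x,y)$ are, respectively, order-$2$ and order-$1$ Riesz potentials, so classical fractional-integration inequalities give $T_h\colon L^1(B) \to L^{d/(d-2),\infty}(B)$ and $T_{\vec g}\colon L^1(B) \to L^{d/(d-1),\infty}(B)$. Because $|B|$ is finite and both Lorentz exponents exceed $1$, these bounds upgrade to weak-type $(1,1)$ on $B$ by splitting into the regime $t \lesssim $ (the relevant $L^1$ norm) -- where the trivial bound $|\{\cdot\}| \le |B|$ suffices -- and the regime $t \gg$ (the relevant $L^1$ norm) -- where the stronger Lorentz bound is already dominated by $C\|\cdot\|_{L^1}/t$. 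The operator $T_{\mathbf f}$ carries the full singularity $|x-y|^{-d}$, and is weak-type $(1,1)$ directly by the Calder\'on--Zygmund theorem. Combining via sub-additivity of distribution functions yields
\[
|\{x \in B : |u(x)|>t\}| \le \frac{C}{t}\left(\int_B |\mathbf f| + |\vec g| + |h|\right),
\]
as claimed.

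The main obstacle will be the rigorous justification of the representation formula for an $L^2$ weak solution: the Robin-type boundary condition $(\mathbf A_0 \nu \cdot \nu)u = \mathbf f \nu \cdot \nu$ on $\partial B$ must be carried through the mollification limit so that the boundary contributions from the double integration by parts cancel exactly against the data-induced boundary term, leaving the clean Green's function formula. A convenient route is to establish the identity first for smooth data, where the weak solution is classical and the limit $\varepsilon \to 0^+$ is routine, and then extend to the $L^2$ case by density, exploiting that both sides of the representation and the target weak-type bound depend continuously on the $L^1$ norms of $\mathbf f$, $\vec g$, and $h$.
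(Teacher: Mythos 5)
Your route is genuinely different from the paper's. The paper decomposes $u=u_1+u_2+u_3$ according to the three data terms, quotes \cite[Lemma~2.23]{DK17} for the $\mathbf f$-part, and for the $\vec g$- and $h$-parts verifies, by a duality argument (testing against solutions of the nondivergence problem with data supported in annuli, and using constant-coefficient interior/boundary estimates together with the $L^2$ theory), the H\"ormander-type hypothesis of the abstract weak-$(1,1)$ lemma \cite[Lemma~2.1]{DK17}; no Green's function or explicit kernel is ever constructed. You instead represent $u$ through the Dirichlet Green's function of $\tr(\mathbf A_0D^2\,\cdot)$ in $B$ and invoke Calder\'on--Zygmund theory and fractional integration, upgrading the weak-$(1,q)$ bounds with $q>1$ to weak-$(1,1)$ using $\abs{B}<\infty$; that upgrade is correct, and your smooth-data-then-density plan for justifying the representation is legitimate because the solution map and the kernel operators are all bounded on $L^2$. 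The paper's argument buys freedom from any representation formula for rough data; yours buys a self-contained proof that does not rely on citing the $\mathbf f$-case as a black box.

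Two points in your sketch need repair. First, the representation formula as written is false: the $\mathbf f$-term must carry an additional local term of the form $\tr(\mathbf f(x)\,\mathbf M)$, with $\mathbf M$ a constant matrix determined by $\mathbf A_0$, coming from the jump of the second derivatives of the fundamental solution across the singularity. Indeed, take $\mathbf f=\mathbf A_0$, $\vec g=0$, $h=0$: then $u\equiv 1$, while $\tr\bigl(\mathbf A_0 D^2_yG(x,y)\bigr)=0$ pointwise for $y\neq x$, so your principal value integral vanishes. This does not endanger the estimate, since the local term is dominated by $C\abs{\mathbf f(x)}$ and Chebyshev's inequality gives its weak-$(1,1)$ bound, but it must be included. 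Second, the assertion that $G$ differs from the free-space fundamental solution ``by a smooth regular part, consequently $\abs{D^2_yG(x,y)}\lesssim\abs{x-y}^{-d}$'' is too quick: the second derivatives of the corrector are not uniformly bounded when $x$ and $y$ approach $\partial B$ together, so the stated kernel bounds and the H\"ormander (H\"older-in-$x$) condition require an argument, e.g.\ an affine change of variables to the Laplacian followed by boundary Schauder estimates at scale comparable to $\max(d_x,d_y,\abs{x-y})$, or an explicit reflection formula in the model case. With these two repairs the argument closes and yields the same constant dependence $C=C(d,\lambda,\Lambda)$.
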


\begin{proof}
See Appendix.
\end{proof}

Since $\dv^2 \widehat{\mathbf f}=\dv^2 \mathbf f$ and $\dv (\vec g -\bar{\vec g})=\dv \vec g$, we see that $v=u-w$ satisfies
\[
\dv^2(\bar{\mathbf A} v)=\tr(\bar{\mathbf A} D^2 v)=0\;\text { in }\;B_r.
\]

Let $\varphi(r)$ be as defined in \eqref{eq1100sat}, and let $\beta \in (0,1)$ be an arbitrary but fixed number.
Given $\beta$, we choose a number $\kappa \in (0, \frac12)$ such that $C \kappa \le  \kappa^{\beta}$.

Let $\omega_{\rm coef}(r)$ and $\omega_{\rm dat}(r)$ be as defined in \eqref{omega_coef} and \eqref{omega_dat}.
By Remark~\ref{rmk_omega}, we observe that both $\omega_{\rm coef}(r)$ and $\omega_{\rm dat}(r)$ satisfy the Dini condition.

By using \eqref{eq1020fri} instead of \eqref{eq1622thu} and following the same computations as in \eqref{eq0224sat}, we obtain, similar to \eqref{eq1110sat}, the estimate
\[
\varphi(\kappa r) \le \kappa^\beta \varphi(r) + C \omega_{\rm coef}(r)\, \frac{1}{r} \norm{u}_{L^\infty(B_r)} +C \omega_{\rm dat}(r),
\]
where $C=C(d, \lambda, \Lambda, \beta)$.

Let $M_j(r_0)$ be as defined in \eqref{eq1244sat}, where $r_0\in (0, \frac{1}{2}]$ is a number to be chosen later.
Define $\tilde \omega_{\rm coef}$ and $\tilde \omega_{\rm dat}$ as in \eqref{eq1245sat}, with $\omega_{\rm coef}$ and $\omega_{\rm dat}$ replacing $\omega_{\mathbf A}$, respectively.

Then, by replicating the same argument as in Section~\ref{sec2}, we obtain, similar to \eqref{eq4.38}, the following estimate:
\begin{equation}			\label{eq1120sat}
\varphi(\kappa^j r_0) \le  \frac{\kappa^{\beta j}}{r_0} \fint_{B_{r_0}} \abs{u}+ CM_j(r_0) \tilde \omega_{\rm coef}(\kappa^j r_0) +C \tilde \omega_{\rm dat}(\kappa^j r_0).
\end{equation}

For $j=0,1,2,\ldots$, let $\vec a_j \in \bR^d$ and $b_j \in \bR$ be chosen as in \eqref{eq0203tue}.
Then, by using \eqref{eq1120sat} instead of \eqref{eq4.38} and replicating the same argument as in Section~\ref{sec2}, we obtain the following conclusion:
\begin{equation}			\label{eq1717sun}
\lim_{j\to \infty} b_j=0.
\end{equation}

Additionally, by repeating the same computation that leads to \eqref{eq0946tue} and \eqref{eq0947tue}, we obtain the following estimate:
\begin{multline}			\label{eq1718sun}
\abs{\vec a_k -\vec a_l} +\frac{\abs{b_k - b_l}}{\kappa^l r_0}
\le \frac{C \kappa^{\beta l}}{r_0} \fint_{B_{r_0}} \abs{u}+ CM_k(r_0) \int_0^{\kappa^l r_0} \frac{\tilde \omega_{\rm coef}(t)}{t}\,dt
\\+C \int_0^{\kappa^l r_0} \frac{\tilde \omega_{\rm dat}(t)}{t}\,dt,\quad \text{for }\;k>l\ge 0,
\end{multline}
where $C=C(d, \lambda, \Lambda,\beta)$.
In particular, similar to \eqref{eq0900tue}, we obtain the following bound for $\vec a_j$:
\begin{equation}			\label{eq1628sun}
\abs{\vec a_j} \le  \frac{C}{r_0} \fint_{B_{r_0}} \abs{u}+ CM_j(r_0) \int_0^{r_0} \frac{\tilde \omega_{\rm coef}(t)}{t}\,dt+C \int_0^{r_0} \frac{\tilde \omega_{\rm dat}(t)}{t}\,dt,
\end{equation}
where $C=C(d, \lambda, \Lambda, \beta)$.

Next, for $j=0,1,2,\ldots$, define
\[
v(x):=u(x)-\vec a_j\cdot x-b_j.
\]
We observe that the operator $L^*$ applied to $v$ gives:
\begin{align*}
L^{*} v&= L^* u - L^*(\vec a_j \cdot x + b_j)\\
&=\dv^2 (\mathbf{f} -\mathbf A (\vec a_j \cdot x + b_j))+\dv(\vec g+ (\vec a_j \cdot x + b_j) \vec b) + h-(\vec a_j \cdot x + b_j)c.
\end{align*}

\begin{lemma}			\label{lem1146sat}
For $0<r \le \frac12$, we have
\[
\sup_{B_{r}}\, \abs{v} \le C \left\{ \left( \fint_{B_{2r}} \abs{v}^{\frac12} \right)^{2} + r\int_0^r \frac{\omega_{\rm dat}(t)}{t}\,dt +(r\abs{\vec a_j}+\abs{b_j}) \int_0^r \frac{\omega_{\rm coef}(t)}{t}\,dt \right\},
\]
where $C=C(d, \lambda, \Lambda, \omega_{\rm coef})$.
\end{lemma}
\begin{proof}
The proof closely follows \cite[Lemma 2.2]{KL21}, which is itself derived from the proof of \cite[Theorem 1.8]{DEK18}.
As in the proof of \cite[Lemma 2.2]{KL21}, we aim to control the following quantity for $x_0 \in B_{3r/2}$ and $0<t\le r/4$:
\[
\phi(x_0, t):= \inf_{q \in \bR} \left(\fint_{B_t(x_0)} \abs{v-q}^{\frac12} \right)^2.
\]
Let the following notations be introduced:
\[
\mathbf F= \mathbf{f} -\mathbf A (\vec a_j \cdot x + b_j),\quad
\vec G=\vec g+ (\vec a_j \cdot x + b_j) \vec b,\quad
 H=h-(\vec a_j \cdot x + b_j)c.
\]
For $x_0 \in B_{3r/2}$ and $0<t \le r/4$, let $\bar{\mathbf A}$ and $\bar{\vec g}$ denote the averages of $\mathbf A$ and $\vec g$ over the ball $B_t(x_0)$, respectively.
Now, define $\mathbf L=(\ell^{kl})$ as follows:
\[
\ell^{kl}(x)={(D f^{kl})}_{B_t(x_0)}\cdot (x-x_0) +{(f^{kl})}_{B_t(x_0)}-{(a^{kl})}_{B_t(x_0)} (\vec a_j \cdot x+b_j).
\]

We decompose $v$ as $v=v_1+v_2$, where $v_1 \in L^{p}(B_t(x_0))$, for some $p>1$, is the solution of the problem:
\[
\left\{
\begin{aligned}
\dv^2(\bar{\mathbf A}v_1) &= \dv^2 \left(\mathbf{F} -\mathbf{L}-(\mathbf{A}- \bar{\mathbf A})v\right)+\dv(\vec G-\bar{\vec g} +\vec b v)+H-cv\;\mbox{ in }\;B_t(x_0),\\
(\bar{\mathbf A}\nu\cdot \nu)v_1&=\left(\mathbf{F}-\mathbf{L}-(\mathbf{A}- \bar{\mathbf A}) v\right)\nu\cdot \nu \;\mbox{ on }\;\partial B_t(x_0).
\end{aligned}
\right.
\]
Then, by Lemma~\ref{lem01} via rescaling, we obtain the following estimate:
\begin{multline}				\label{eq2025mon}
\left(\fint_{B_t(x_0)} \abs{v_1}^{\frac12} \right)^{2} \lesssim \fint_{B_t(x_0)}\abs{\mathbf F-\mathbf L}+ \left(\fint_{B_t(x_0)}\abs{\mathbf A-\bar{\mathbf A}}\right)\norm{v}_{L^\infty(B_t(x_0))}+t \fint_{B_t(x_0)} \abs{\vec G-\bar{\vec g}}\\
+ \left(t \fint_{B_t(x_0)}\abs{\vec b}\right)\norm{v}_{L^\infty(B_t(x_0))} + t^2 \fint_{B_t(x_0)} \abs{H} +  \left(t^2 \fint_{B_t(x_0)}\abs{c}\right)\norm{v}_{L^\infty(B_t(x_0))}.
\end{multline}
By definitions of $\mathbf F$, $\vec G$, and $H$, and applying the Poincar\'e inequality (cf. \eqref{eq1103fri}), we obtain:
\[
\begin{aligned}
\fint_{B_t(x_0)}\abs{\mathbf F-\mathbf L} &\le Ct \fint_{B_t(x_0)}\abs{D\mathbf f-(D\mathbf f)_{B_t(x_0)}}+(2r \abs{\vec a_j}+\abs{b_j}) \fint_{B_t(x_0)}\abs{\mathbf A-\bar{\mathbf A}},\\
t\fint_{B_t(x_0)}\abs{\vec G-\bar{\vec g}} &\le t \fint_{B_t(x_0)}\abs{\vec g-\bar{\vec g}}+ (2r \abs{\vec a_j}+\abs{b_j}) t\fint_{B_t(x_0)}\abs{\vec b},\\
t^2\fint_{B_t(x)}\Abs{H} &\le t^2 \fint_{B_t(x)}\abs{h}+ (2r \abs{\vec a_j}+\abs{b_j}) t^2\fint_{B_t(x)}\abs{c}.
\end{aligned}
\]
Thus, using the definitions \eqref{omega_coef} and \eqref{omega_dat}, we deduce from \eqref{eq2025mon} that
\begin{equation}				\label{eq2026mon}
\left(\fint_{B_t(x_0)} \abs{v_1}^{\frac12} \right)^{2} \le C \omega_{\rm coef}(t)\norm{v}_{L^\infty(B_t(x_0))} + Cr \omega_{\rm dat}(t) + C(r\abs{\vec a_j}+\abs{b_j}) \omega_{\rm coef}(t),
\end{equation}
where $C=C(d, \lambda, \Lambda)$.
We note that \eqref{eq2026mon} corresponds to (A.1) in \cite{KL21}.

On the other hand, since $\dv^2 \mathbf{L}=0$ and $\dv \bar{\vec g}=0$, it follows that $v_2=v-v_1$ satisfies
\[
\dv^2(\bar{\mathbf A}v_2) = 0 \;\;\text{ in }\;  B_t(x_0).
\]
Thus, $v_2$ satisfies the estimate corresponding to (A.2) in \cite{KL21}.
The remainder of the proof follows the same arguments as those in \cite[Lemma 2.2]{KL21}.
\end{proof}

By using Lemma~\ref{lem1146sat} and \eqref{eq1120sat}, we obtain the following estimate:
\begin{multline}				\label{eq1218mon}
\norm{v}_{L^\infty(B_{\frac12 \kappa^j r_0})} \le C\kappa^{(1+\beta)j} \fint_{B_{r_0}} \abs{u}+C\kappa^j r_0 M_j(r_0) \tilde\omega_{\rm coef}(\kappa^j r_0)
+C \kappa^j r_0 \tilde \omega_{\rm dat}(\kappa^j r_0)\\
+C \left(\abs{\vec a_j}\kappa^j r_0+\abs{b_j}\right) \int_0^{\kappa^j r_0} \frac{\omega_{\rm coef}(t)}{t}\,dt + C \kappa^j r_0 \int_0^{\kappa^j r_0} \frac{\omega_{\rm dat}(t)}{t}\,dt.
\end{multline}
We require $r_0 \in (0,\frac12]$ to satisfy $r_0 \le r_1$, where $r_1= r_1(d, \lambda, \Lambda, \omega_{\rm coef}, \beta)>0$ is chosen so that
\[
C \int_0^{r_1} \frac{\omega_{\rm coef}(t)}{t}\,dt\le \frac12.
\]
Note that $\abs{v(0)}=\abs{b_j}$.
Similarly to \eqref{eq7.51}, we derive from \eqref{eq1218mon} and \eqref{eq1628sun} the following inequality:
\begin{multline}			\label{eq1123mon}
\abs{b_j} \le C \kappa^{j}r_0 \left\{\kappa^{\beta j}+\int_0^{\kappa^j r_0} \frac{\omega_{\rm coef}(t)}{t}\,dt\right\}\frac{1}{r_0} \fint_{B_{r_0}} \abs{u}\\+C\kappa^j r_0 M_j(r_0)\int_0^{\kappa^j r_0}\frac{\tilde\omega_{\rm coef}(t)}{t}\,dt
+ C \kappa^j r_0 \int_0^{r_0}\frac{\tilde\omega_{\rm dat}(t)}{t}\,dt,
\end{multline}
where $C=C(d, \lambda, \Lambda,\omega_{\rm coef}, \beta)$.

Then, similar to \eqref{eq1920thu}, we obtain from \eqref{eq1218mon}, \eqref{eq1628sun}, and \eqref{eq1123mon} (recalling $v(x)=u(x)-\vec a_j\cdot x-b_j$) the following estimate:
\begin{align}				\nonumber
&\norm{u-\vec a_j\cdot x-b_j}_{L^\infty(B_{\frac12 \kappa^j r_0})}  \le C \kappa^j r_0 \left\{\kappa^{\beta j}+ \int_0^{\kappa^j r_0} \frac{\omega_{\rm coef}(t)}{t}\,dt \right\} \frac{1}{r_0} \fint_{B_{r_0}} \abs{u}\\
						\nonumber
&\qquad+C \kappa^j r_0 M_j(r_0) \int_0^{\kappa^j r_0} \frac{\tilde\omega_{\rm coef}(t)}{t}\,dt\\
						\label{eq1720sun}
&\qquad +C \kappa^j r_0 \left\{\int_0^{\kappa^j r_0} \frac{\tilde \omega_{\rm dat}(t)}{t}\,dt +\left(\int_0^{r_0} \frac{\tilde \omega_{\rm dat}(t)}{t}\,dt\right)\int_0^{\kappa^j r_0} \frac{\omega_{\rm coef}(t)}{t}\,dt\right\}.
\end{align}
Additionally, similar to \eqref{eq1555sun}, we obtain
\begin{equation}				\label{eq1244mon}
\frac 2 {\kappa^j r_0}\norm{u}_{L^\infty(B_{\frac12 \kappa^j r_0})}
\le \frac C {r_0} \fint_{B_{r_0}} \abs{u} + C M_j(r_0) \int_0^{r_0} \frac{\tilde \omega_{\rm coef}(t)}{t}\,dt + C \int_0^{r_0}\frac{\tilde\omega_{\rm dat}(t)}{t}\,dt,
\end{equation}
where $C=C(d, \lambda, \Lambda, \omega_{\rm coef}, \beta)$.

Then, by following the same proof as in Lemma~\ref{lem1548sun}, we conclude from \eqref{eq1244mon} that there exists $r_0=r_0(d, \lambda, \Lambda, \omega_{\rm coef}, \beta) \in (0,\frac12]$ such that
\begin{equation}			\label{eq1706sun}
\sup_{j \ge 1} M_j(r_0) \le \frac{C}{r_0} \fint_{B_{2r_0}} \abs{u}+C \int_0^{r_0} \frac{\tilde \omega_{\rm dat}(t)}{t}\,dt,
\end{equation}
where $C=C(d, \lambda, \Lambda, \omega_{\rm coef}, \beta)$.

It follows from \eqref{eq1718sun} and \eqref{eq1706sun} that the sequence $\set{\vec a_j}$ converges to some $\hat{\vec a} \in\bR^d$, since it is a Cauchy sequence in $\bR^d$.
By taking the limit as $k\to \infty$ in \eqref{eq1718sun} (while recalling \eqref{eq1706sun} and \eqref{eq1717sun}), and then setting $l=j$, we obtain the following estimate:
\begin{multline}			\label{eq2203sun}
\abs{\vec a_j -\hat{\vec a}} + \frac{\abs{b_j}}{\kappa^j r_0}  \le C \left( \kappa^{\beta j}+\int_0^{\kappa^j r_0} \frac{\tilde \omega_{\rm coef}(t)}{t}\,dt \right) \frac{1}{r_0} \fint_{B_{2r_0}} \abs{u}\\
+C \left(\int_0^{r_0} \frac{\tilde \omega_{\rm dat}(t)}{t}\,dt \right) \int_0^{\kappa^j r_0} \frac{\tilde \omega_{\rm coef}(t)}{t}\,dt+ C \int_0^{\kappa^j r_0} \frac{\tilde \omega_{\rm dat}(t)}{t}\,dt,
\end{multline}
where $C=C(d, \lambda, \Lambda,\omega_{\rm coef}, \beta)$.

Then, similar to \eqref{eq2221sun}, it follows from \eqref{eq1720sun}, \eqref{eq1706sun}, and \eqref{eq2203sun} that
\begin{multline*}
\norm{u-\hat{\vec a}\cdot x}_{L^\infty(B_{\frac12 \kappa^j r_0})} \le
C \kappa^j r_0 \left\{\kappa^{\beta j}+ \int_0^{\kappa^j r_0} \frac{\tilde \omega_{\rm coef}(t)}{t}\,dt \right\}\frac{1}{r_0} \fint_{B_{2r_0}} \abs{u}\\
+C \kappa^j r_0 \left(\int_0^{r_0} \frac{\tilde \omega_{\rm dat}(t)}{t}\,dt\right)\int_0^{\kappa^j r_0} \frac{\tilde\omega_{\rm coef}(t)}{t}\,dt+ C \kappa^j r_0 \int_0^{\kappa^j r_0} \frac{\tilde \omega_{\rm dat}(t)}{t}\,dt.
\end{multline*}

From the previous inequality, we derive the following estimate:
\begin{equation}		\label{eq2010sun}
\frac{1}{r} \norm{u-\hat{\vec a}\cdot x}_{L^\infty(B_r)}  \le \varrho_{\rm coef}(r) \left\{\frac{1}{r_0} \fint_{B_{2r_0}} \abs{u} + \varrho_{\rm dat}(r_0)\right\}+\varrho_{\rm dat}(r),\quad \forall r \in (0,r_0/2),
\end{equation}
where the moduli of continuity $\varrho_{\rm coef}(\cdot)$ and $\varrho_{\rm dat}(\cdot)$ are defined by
\begin{equation}		\label{eq2011sun}
\begin{aligned}
\varrho_{\rm coef}(r)&:=C\left\{\left(\frac{2r}{\kappa r_0}\right)^\beta+\int_0^{2r/\kappa}\frac{\tilde\omega_{\rm coef}(t)}{t}\,dt\right\},\\
\varrho_{\rm dat}(r)&:=C \int_0^{2r/\kappa} \frac{\tilde \omega_{\rm dat}(t)}{t}\,dt,
\end{aligned}
\end{equation}
where $C=C(d, \lambda, \Lambda,\omega_{\rm coef}, \beta)$.

In particular, we conclude from \eqref{eq2010sun} that $u$ is differentiable at $0$.
Moreover, it follows from \eqref{eq1628sun} and \eqref{eq1706sun} that (noting that $Du(0)=\hat{\vec a}=\lim_{j\to \infty} \vec a_j$) we have
\[
\abs{D u (0)} \le C \left(\frac{1}{r_0} \fint_{B_{2r_0}} \abs{u}+ \int_0^{r_0} \frac{\tilde \omega_{\rm dat}(t)}{t}\,dt\right),
\]
where $C=C(d, \lambda, \Lambda, \omega_{\rm coef},\beta)$.
\qed

\section{Proof of Theorem~\ref{thm-main02}}		\label{sec4}

Throughout this section, we use the following notation:
\[
B_r^+=B_r^{+}(0)=B_r(0) \cap \set{x_d >0}\quad\text{and}\quad  T_r=T_r(0)=B_r(0) \cap \set{x_d=0}.
\]
We fix a smooth (convex) domain $\cD$ satisfying $B_{1/2}^{+}(0) \subset \cD \subset B_1^{+}(0)$, such that $\partial \cD$ contains a flat portion $T_{1/2}(0)$.
For $x_0 \in \partial \bR^d_{+}=\set{x_d=0}$, we define
\[
B_r^{+}(x_0)=B_r^{+}(0)+ x_0,\quad T_r(x_0)=T_r(0)+x_0,\quad\text{and}\quad \cD_r(x_0)= r \cD+ x_0.
\]

\subsection{Special case:  $\partial\Omega$ is a flat near $x^o$.}		\label{sec:flat}
We begin by considering the simple case where $x^o$ lies on a flat portion of the boundary.
By Remark~\ref{rmk_rescaling}, we may assume that $x^o=0$, $B_2^{+} \subset \Omega$, and $T_2  \subset \partial\Omega$, such that $u=f^{dd}/a^{dd}$ on $T_2$.

By \cite[Theorem 1.8]{DEK18}, we have $u \in C(\overline {B_1^+})$.
We now show that $u$ is differentiable at $0$.
Denote by $\bar{\mathbf A}$ and $\bar{\vec g}$ the averages of $\mathbf A$ and $\vec g$ over the half ball $B_{2r}^+$, respectively, and let $\widehat{\mathbf f}=(\widehat f^{ij})$ be defined by
\[
\widehat f^{ij}(x)=f^{ij}(x)-(D f^{ij})_{B_{2r}^+} \cdot x-m^{ij},
\]
where the constants $m^{ij}$ are chosen so that $\int_{B_{2r}^+} \widehat f^{ij}=0$.
By the Poincar\'e inequality, we have
\begin{equation}			\label{eq1025mon}
\fint_{B_{2r}^+} \abs{\widehat f^{ij}} \le C r \fint_{B_{2r}^+} \abs{D\widehat f^{ij}} = C r \fint_{B_{2r}^+} \abs{D f^{ij}- (Df^{ij})_{B_{2r}^+}}  \le C r \omega_{D \mathbf f}(2r).
\end{equation}
We decompose $u$ as $u=v+w$, where $w$ is the $L^p$ weak solution (for some $p>1$) of the problem
\begin{equation}			\label{eq1009sat}
\left\{
\begin{aligned}
\dv^2( \bar{\mathbf A} w) &= \dv^2 (\widehat{\mathbf f}-(\mathbf{A}-\bar{\mathbf A})u)+\dv (\vec b u + \vec g -\bar{\vec g})+h-cu\;\mbox{ in }\; \cD_{2r},\\
(\bar{\mathbf A} \nu \cdot \nu) w&= (\widehat{\mathbf f}-(\mathbf{A}-\bar{\mathbf A})u) \nu \cdot \nu \;\mbox{ on }\;\partial \cD_{2r}.
\end{aligned}
\right.
\end{equation}
We note that Lemma \ref{lem01} remains unchanged if we replace $B_1$ by $\cD$.
Therefore, by the geometry of $\cD$ and rescaling, we have
\begin{multline}			\label{eq1649sat}
\left(\fint_{\cD_{2r}} \abs{w}^{\frac12}\right)^{2} \le  Cr \omega_{D \mathbf f}(2r)+C\omega_{\mathbf A}(2r)\,\norm{u}_{L^\infty(B_{2r}^+)}+Cr\left(\fint_{B^+_{2r}} \abs{\vec b}\right) \norm{u}_{L^\infty(B_{2r}^+)}\\
+Cr \omega_{\vec g}(2r)+ Cr^2 \fint_{B_{2r}^+} \abs{h} + Cr^2 \left(\fint_{B^+_{2r}} \abs{c} \right) \norm{u}_{L^\infty(B_{2r}^+)},
\end{multline}
where we used \eqref{eq1025mon}.

Let $\omega_{\rm coef}$ and $\omega_{\rm dat}$ be defined as in \eqref{omega_coef} and \eqref{omega_dat}.
Then, \eqref{eq1649sat} implies
\begin{equation}			\label{eq2017fri}
\left(\fint_{B_{r}^+} \abs{w}^{\frac12}\right)^{2} \le C \omega_{\rm coef}(2r)\, \norm{u}_{L^\infty(B_{2r}^+)}+ C r\omega_{\rm dat}(2r).
\end{equation}
Note that $v=u-w$ satisfies
\[
\dv^2(\bar{\mathbf A} v)=0\;\text { in }\; B_{r}^+,\quad
(\bar{\mathbf A} \nu \cdot \nu)v=(\mathbf f-\hat{\mathbf f}) \nu\cdot \nu\;\text{ on }\;T_r,
\]
where we used that $\dv^2\widehat{\mathbf f}= \dv^2 \mathbf f$, $\dv \bar{\vec g}=0$, and $(\mathbf{A} \nu\cdot \nu)u=\mathbf f \nu \cdot \nu$ on $T_2$.

Lemma \ref{lem2059mon} below shows that we have
\begin{equation}			\label{eq1947sat}
[v]_{C^{1,1}(B_{r/2}^+)} \le \frac{C}{r^2} \left(\fint_{B_{r}^+} \abs{v}^{\frac12}\right)^{2}.
\end{equation}

\begin{lemma}		\label{lem2059mon}
Let $\mathbf A_0$ be a constant symmetric matrix satisfying the ellipticity condition \eqref{ellipticity-nd} and $\mathbf f_0$ be a symmetric matrix whose entries are affine functions.
If $u\in L^p(B_2^+)$, for some $p>1$, satisfies
\[
\dv^2(\mathbf A_0 u)=0 \;\text { in }\; B_{2}^+,
\quad  (\mathbf A_0 \nu \cdot \nu)u=\mathbf f_0 \nu\cdot \nu \;\text{ on }\;T_2,
\]
then, we have
\[
\norm{D^2 u}_{L^\infty(B_1^+)}\le C \norm{u}_{L^{1/2}(B_2^+)},
\]
where $C=C(d, \lambda,\Lambda)$.
\end{lemma}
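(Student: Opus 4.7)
The plan is to reduce the problem to the Laplace equation with homogeneous Dirichlet data on the flat boundary and then apply a reflection argument. Since $\mathbf A_0$ is constant, the distributional identity $\dv^2(\mathbf A_0 u)=a^{ij}_0 D_{ij} u$ combined with Weyl's lemma shows that $u$ is real-analytic in $B_2^+$. On $T_2$ one has $\nu=-e_d$, so the boundary condition reads $a^{dd}_0 u=f^{dd}_0$, and $u$ coincides there with the affine function $\ell(x):=f^{dd}_0(x',0)/a^{dd}_0$; I choose this trivial extension (no $x_d$-dependence) so that $\ell|_{T_2}$ is determined by $u|_{T_2}$ alone. Since $D^2\ell=0$, the function $\tilde u:=u-\ell$ solves the same equation with $\tilde u=0$ on $T_2$ and satisfies $D^2 u=D^2\tilde u$.

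I then normalize the coefficients by an affine change of variables preserving the hyperplane $\{x_d=0\}$: because the Schur complement of $a^{dd}_0$ in $\mathbf A_0$ is positive definite, one can take an upper block-triangular $T$ with $TT^\top=\mathbf A_0$ whose last row is $(0,\ldots,0,\sqrt{a^{dd}_0})$ and whose entries and those of $T^{-1}$ are controlled by $\lambda,\Lambda$. Setting $\tilde v(y):=\tilde u(Ty)$ yields $\Delta_y\tilde v=0$ on $T^{-1}(B_2^+)\supset B_\rho^+$ for some $\rho=\rho(d,\lambda,\Lambda)>0$, still with zero Dirichlet data on the flat portion. Extending $\tilde v$ by odd reflection across $\{y_d=0\}$ produces a function $V$ which is harmonic on $B_\rho$, since the Laplacian is invariant under the reflection and the trace vanishes. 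By the mean-value property applied to $V$ and its (also harmonic) partial derivatives, combined with the classical bootstrap upgrading any $L^p$ bound to $L^\infty$ for $p>0$, one obtains $\|D^2V\|_{L^\infty(B_{\rho/2})}\le C\|V\|_{L^{1/2}(B_\rho)}$. Transforming back yields $\|D^2 u\|_{L^\infty(B_1^+)}\le C\|\tilde u\|_{L^{1/2}(B_2^+)}$ with $C=C(d,\lambda,\Lambda)$.

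The main remaining obstacle is to replace $\|\tilde u\|_{L^{1/2}}$ by $\|u\|_{L^{1/2}}$, i.e., to absorb the affine function $\ell$. My intended route uses the modified reflection $U(y',y_d):=v(y',y_d)$ for $y_d\ge 0$ and $U(y',y_d):=2\tilde\ell(y',0)-v(y',-y_d)$ for $y_d<0$, where $v(y):=u(Ty)$ and $\tilde\ell:=\ell\circ T$; since $v|_{T_\rho}$ is affine, the PDE $\Delta v=0$ forces $\partial_{y_d}^2 v=-\sum_{i<d}\partial_{y_i}^2 v=0$ on $T_\rho$, and an inductive argument shows that all odd-order normal derivatives of $U$ match while even-order ones vanish on $T_\rho$, so $U$ is smooth and harmonic on $B_\rho$. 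Because $\tilde\ell(y',0)=U(y',0)$ is the trace of the harmonic function $U$, the coefficients of $\tilde\ell$ are controlled pointwise by $U$ and its first derivative at the origin, which in turn are bounded by $\|U\|_{L^{1/2}(B_\rho)}\le C(\|u\|_{L^{1/2}(B_2^+)}+\|\tilde\ell\|_{L^{1/2}(B_\rho^+)})$; a scaling argument that balances the $\rho^{-2d-1}$ scaling of pointwise-to-$L^{1/2}$ estimates against the $\rho^{2d}$ scaling of the $L^{1/2}$-norm of an affine function on $B_\rho^+$ permits absorption of the $\tilde\ell$-term and closes the estimate.
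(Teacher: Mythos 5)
Your reduction to the Laplacian via the factorization $\mathbf A_0=TT^{\top}$ preserving $\{x_d=0\}$, the odd/modified reflection, and the resulting estimate $\norm{D^2u}_{L^\infty(B_1^+)}\le C\norm{u-\ell}_{L^{1/2}(B_2^+)}$ are all fine (modulo the routine covering argument needed to reach all of $T^{-1}(B_1^+)$, and the justification that the weak adjoint boundary condition yields the pointwise affine trace, which the paper also only cites from \cite{DEK18}). The genuine gap is exactly the step you flag as the ``main remaining obstacle'': bounding the affine trace by $\norm{u}_{L^{1/2}}$. The absorption mechanism you describe cannot work as stated. The three inequalities you would chain --- the interior estimate $\abs{U(0)}+\rho\abs{DU(0)}\le C_0\bigl(\fint_{B_\rho}\abs{U}^{1/2}\bigr)^2$, the splitting of $U$ into its $v$-part and its $\tilde\ell$-part, and the comparison $\bigl(\fint_{B_\rho^+}\abs{\tilde\ell}^{1/2}\bigr)^2\le C_2\bigl(\abs{U(0)}+\rho\abs{D'U(0)}\bigr)$ --- are each invariant under rescaling, so the powers $\rho^{-2d}$ and $\rho^{2d}$ you hope to balance cancel identically, leaving $\abs{U(0)}+\rho\abs{DU(0)}\le C_1\bigl(\fint_{B_\rho^+}\abs{v}^{1/2}\bigr)^2+C\bigl(\abs{U(0)}+\rho\abs{D'U(0)}\bigr)$ with a fixed dimensional constant $C\ge 1$ on the right; no choice of $\rho$ produces the smallness needed to absorb the $\tilde\ell$-term.

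The claim you need is in fact true, but it requires a different source of smallness. For instance, since $W:=u-\ell$ (in the reduced coordinates, oddly reflected) is harmonic and vanishes on the hyperplane, one has $\abs{u-\ell}\le C\abs{y_d}\,\rho^{-1}\bigl(\fint_{B_\rho}\abs{W}^{1/2}\bigr)^2$ near $\{y_d=0\}$; integrating $\abs{u}^{1/2}$ over a slab $\{0<y_d<\delta\rho\}$ intersected with the set where $\abs{\ell}$ is comparable to its coefficient size $M$, and choosing the slab thickness $\delta$ as a small dimensional constant, yields $M\le C\bigl(\fint_{B_2^+}\abs{u}^{1/2}\bigr)^2$ --- here the smallness comes from the factor $\abs{y_d}$, not from $\rho$-scaling (a compactness argument also works). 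Note that the paper sidesteps this issue entirely: it never bounds the boundary data by $u$. Instead it differentiates the reduced equation twice in tangential directions (so the affine data drops out and $D_{kl}u$ has zero Dirichlet data), observes from the equation that $D_{dd}u=-\sum_{k<d}D_{kk}u=0$ on $T_2$ so that $D_du$ satisfies a homogeneous Neumann condition, obtains $\norm{D^2u}_{L^\infty(B_1^+)}+\norm{D^3u}_{L^\infty(B_1^+)}\le C\norm{D^2u}_{L^2(B_{3/2}^+)}$, and closes with the interpolation inequality $\norm{D^2u}_{L^2}+\norm{u}_{L^\infty}\le \varepsilon\norm{D^3u}_{L^\infty}+C\norm{u}_{L^{1/2}}$ and a standard iteration. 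Either adopt that route, or repair your absorption step along the lines sketched above; as written, the proof is incomplete at its decisive point.
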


\begin{proof}
See Appendix.
\end{proof}

Note that the Taylor's theorem yields
\[
\sup_{x\in B_\rho^+}\, \abs{v(x)-v(0)-Dv(0)\cdot x}  \le C [v]_{C^{1,1}(B_\rho^+)} \rho^2,\quad \forall \rho \in (0, \tfrac12 r].
\]
Hence, by \eqref{eq1947sat}, we have
\[
\left(\fint_{B_{\rho}^+} \abs{v(x)-v(0)-Dv(0)\cdot x}^{\frac12}dx\right)^{2} \le C [v]_{C^{1,1}(B_{r/2}^+)} \rho^2 \le C \left(\frac{\rho}{r}\right)^2 \left(\fint_{B_r^+} \abs{v}^{\frac12}\right)^{2}.
\]
Note that the previous inequality remains unchanged if we replace $v$ by $v-\ell$ for any affine function $\ell(x)=\vec a \cdot x +b$.
Therefore, for any $\kappa \in (0, \frac12)$ and an affine function $\ell$, we have
\[
\left(\fint_{B_{\kappa r}^+} \abs{v(x)-v(0)-Dv(0)\cdot x}^{\frac12}dx\right)^{2}  \le C \kappa^2 \left(\fint_{B_r^+} \abs{v-\ell}^{\frac12}\right)^{2}.
\]

Next, define
\[
\varphi^+(r):=\frac{1}{r} \,\inf_{\substack{\vec a \in \bR^d\\b \in \bR}} \left(\fint_{B_{r}^+} \abs{u(x)-\vec a \cdot x - b}^{\frac12}dx\right)^{2},
\]
and, following \eqref{eq0224sat}, for any $\kappa \in (0,\frac12)$, we obtain
\[
\kappa r \varphi^+(\kappa r) \le C \kappa^2 \left(\fint_{B_r^+} \abs{u-\ell}^{\frac12}\right)^{2} + C \left(\kappa^2+\kappa^{-2d}\right) \left(\omega_{\rm coef}(2r) \norm{u}_{L^\infty(B_{2r}^+)}+ r\omega_{\rm dat}(2r)\right),
\]
where $C=C(d, \lambda, \Lambda)$.

Then, as in the previous sections, for any fixed number $\beta \in (0,1)$, we can choose a number $\kappa \in (0,\frac12)$ that depends solely on $d$, $\lambda$, $\Lambda$, and $\beta$, such that following inequality holds:
\[
\varphi^+(\kappa r) \le \kappa^{\beta} \varphi^+(r) + C \omega_{\rm coef}(2r)\, \frac{1}{2r}\, \norm{u}_{L^\infty(B_{2r}^+)} +C \omega_{\rm dat}(2r),
\]
where $C=C(d, \lambda, \Lambda, \beta)$.

Let $r_0\in (0, \frac12]$ be a number to be chosen later.
Similar to \eqref{eq1244sat}, define
\[
M^+_j(r_0):=\max_{0\le i <j}\, \frac{1}{k^i r_0} \,\norm{u}_{L^\infty(B_{\kappa^i r_0}^+)} \quad \text{for }j=1,2,\ldots.
\]
Then, similarly to \eqref{eq1114sat}, we obtain
\[
\varphi^+(\kappa^j r_0) \le \kappa^{\beta j} \varphi^+(r_0) + C M_j^+(2r_0)\, \tilde \omega_{\rm coef}(2\kappa^j r_0)  +C \tilde \omega_{\rm dat}(2\kappa^j r_0),\quad j=1,2,\ldots,
\]
where $\tilde \omega_{\rm coef}$ and $\tilde \omega_{\rm dat}$ are as defined in \eqref{eq1245sat}.

For $j=0,1,2,\ldots$, let $\vec a_j \in \bR^d$ and $b_j \in \bR$ be chosen so that
\[
\varphi^+(\kappa^j r_0)= \frac{1}{\kappa^j r_0}
\left(\fint_{B_{\kappa^j r_0}^+} \abs{u(x)-\vec a_j\cdot x - b_j}^{\frac12}dx\right)^{2}.
\]
Then, similarly to \eqref{eq0538tue} and \eqref{eq4.38}, we have
\begin{gather*}
\varphi^+(r_0) \le \frac{1}{r_0} \fint_{B_{r_0}^+} \abs{u},\\
\varphi^+(\kappa^j r_0) \le  \frac{\kappa^{\beta j}}{r_0} \fint_{B_{r_0}^+} \abs{u}+ C M_j^+(2r_0)\, \tilde \omega_{\rm coef}(2\kappa^j r_0)  +C \tilde \omega_{\rm dat}(2\kappa^j r_0),\quad j=1,2,\ldots.
\end{gather*}
Also, by the same argument as in the previous sections, we find
\begin{equation}			\label{eq1918tue}
\lim_{j\to \infty} b_j=0.
\end{equation}

Note that for any $\vec a \in \bR^d$, we have
\begin{equation}		\label{eq1810mon}
\fint_{B_r^+} \abs{\vec a \cdot x}^{\frac12} dx\ge  \abs{\vec a}^{\frac12} \inf_{\abs{\vec e}=1} \fint_{B_r^+}  \abs{\vec e \cdot x}^{\frac12}dx = \abs{\vec a}^{\frac12} r^{\frac12}  \inf_{\abs{\vec e}=1} \fint_{B_1^+} \abs{\vec e \cdot x}^{\frac12}dx= C\abs{\vec a}^{\frac12} r^{\frac12},
\end{equation}
where $C=C(d)>0$.
We use \eqref{eq1810mon}, instead of the identity \eqref{eq0229tue}, and repeat the same argument as in the previous sections.
We then obtain
\begin{multline}			\label{eq1748mon}
\abs{\vec a_k -\vec a_l} +\frac{\abs{b_k - b_l}}{\kappa^l r_0}  \le  \frac{C\kappa^{\beta l}}{r_0} \fint_{B_{r_0}^+} \abs{u}+ CM_k^+(2r_0) \int_0^{2\kappa^l r_0} \frac{\tilde \omega_{\rm coef}(t)}{t}\,dt\\
+ C \int_0^{2\kappa^l r_0} \frac{\tilde \omega_{\rm dat}(t)}{t}\,dt,\quad \text{for }\;k>l\ge 0,
\end{multline}
and for $j=1,2,\ldots$, we have
\begin{equation}			\label{eq1749mon}
\abs{\vec a_j} \le  \frac{C}{r_0} \fint_{B_{r_0}^+} \abs{u}+ CM_j^+(2r_0) \int_0^{2r_0} \frac{\tilde \omega_{\rm coef}(t)}{t}\,dt+C \int_0^{2r_0} \frac{\tilde \omega_{\rm dat}(t)}{t}\,dt.
\end{equation}

Moreover, we observe that $v(x)=u(x)-\vec a_j\cdot x-b_j$ satisfies
\begin{align*}
L^{*} v=\dv^2 (\mathbf{f} -\mathbf A (\vec a_j \cdot x + b_j))+\dv(\vec g+ (\vec a_j \cdot x + b_j) \vec b) + h-(\vec a_j \cdot x + b_j)c&\quad\text{in }\; B_2^{+}\\
(\mathbf A \nu \cdot \nu)v= (\mathbf{f} -\mathbf A (\vec a_j \cdot x + b_j)) \nu \cdot \nu &\quad\text{on }\; T_2.
\end{align*}

\begin{lemma}			\label{lem0921tue}
For $0<r \le \frac12$, we have
\[
\sup_{B_{r}^+}\, \abs{v} \le C \left\{ \left( \fint_{B_{2r}^+} \abs{v}^{\frac12} \right)^{2} + r\int_0^{2r} \frac{\omega_{\rm dat}(t)}{t}\,dt +(r\abs{\vec a_j}+\abs{b_j}) \int_0^{2r} \frac{\omega_{\rm coef}(t)}{t}\,dt \right\},
\]
where $C=C(d, \lambda, \Lambda, \omega_{\rm coef})$.
\end{lemma}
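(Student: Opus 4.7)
The plan is to adapt the proof of Lemma~\ref{lem1146sat}, with the freezing ball now allowed to meet the flat boundary $T_2$. Fix $x_0 \in \overline{B_{3r/2}^{+}}$ and $0 < t \le r/4$. If $B_t(x_0) \subset B_2^{+}$, the interior argument of Lemma~\ref{lem1146sat} applies without modification. Otherwise, translate $x_0$ to its nearest boundary projection $x_0' \in T_2$ and carry out the freezing on the smoothed half-ball $\cD_t(x_0')$, replacing the full-ball averages used in Lemma~\ref{lem1146sat} by half-ball averages over $B_t^{+}(x_0')$ throughout.

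On $\cD_t(x_0')$ decompose $v=v_1+v_2$. Define $\mathbf F,\vec G, H$ exactly as in Lemma~\ref{lem1146sat} (with $\vec a_j, b_j$ unchanged), and set
\[
\ell^{kl}(x)=(Df^{kl})_{B_t^{+}(x_0')}\cdot(x-x_0')+(f^{kl})_{B_t^{+}(x_0')}-(a^{kl})_{B_t^{+}(x_0')}(\vec a_j\cdot x+b_j),
\]
with $\bar{\mathbf A}$ and $\bar{\vec g}$ denoting the means of $\mathbf A$ and $\vec g$ over $B_t^{+}(x_0')$. Let $v_1$ solve the Dirichlet-type system
\[
\left\{
\begin{aligned}
\dv^2(\bar{\mathbf A}v_1)&=\dv^2(\mathbf F-\mathbf L-(\mathbf A-\bar{\mathbf A})v)+\dv(\vec G-\bar{\vec g}+\vec b v)+H-cv &&\text{in }\cD_t(x_0'),\\
(\bar{\mathbf A}\nu\cdot\nu)v_1&=(\mathbf F-\mathbf L-(\mathbf A-\bar{\mathbf A})v)\nu\cdot\nu &&\text{on }\partial\cD_t(x_0').
\end{aligned}
\right.
\]
The half-ball version of Lemma~\ref{lem01} (the same variant already invoked to derive \eqref{eq1649sat}), together with the Poincar\'e estimate \eqref{eq1025mon} and the definitions \eqref{omega_coef}--\eqref{omega_dat}, yields the direct analogue of \eqref{eq2026mon}:
\[
\left(\fint_{B_t^{+}(x_0')}\abs{v_1}^{1/2}\right)^{2}\le C\,\omega_{\rm coef}(2t)\norm{v}_{L^\infty(B_t^{+}(x_0'))}+Cr\,\omega_{\rm dat}(2t)+C(r\abs{\vec a_j}+\abs{b_j})\,\omega_{\rm coef}(2t).
\]

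Since $\mathbf L$ is affine, $\dv^2\mathbf L=0$, and by construction the boundary trace $(\mathbf A\nu\cdot\nu)v = (\mathbf f - \mathbf A(\vec a_j\cdot x + b_j))\nu\cdot\nu$ is matched on $T_2$, so $v_2=v-v_1$ solves $\dv^2(\bar{\mathbf A}v_2)=0$ in $B_t^{+}(x_0')$ with $(\bar{\mathbf A}\nu\cdot\nu)v_2=\mathbf L\nu\cdot\nu$ on $T_t(x_0')$. Rescaling $B_t^{+}(x_0')$ to the unit half-ball and applying Lemma~\ref{lem2059mon} furnishes the $C^{1,1}$ pointwise control on $v_2$ up to the boundary that replaces the interior Liouville-type estimate used in \cite[Lemma~2.2]{KL21}. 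With both bounds in hand, the remaining iteration across dyadic scales, optimization of $t$ with respect to $\dist(x_0,\partial B_{2r}^{+})$, and absorption of the $\omega_{\rm coef}(t)\norm{v}_{L^\infty}$ term by smallness of $r$, proceed exactly as in \cite[Lemma~2.2]{KL21} and \cite[Theorem~1.8]{DEK18}.

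The main technical obstacle is confirming that the boundary trace of $v_2$ is genuinely given by $\mathbf L\nu\cdot\nu$; this is precisely what forces the particular form of $\mathbf L$ (which absorbs both the affine piece of $\mathbf f$ and the correction term $-(a^{kl})_{B_t^{+}(x_0')}(\vec a_j\cdot x+b_j)$ needed to match the inhomogeneous boundary condition). Once this matching is verified, Lemma~\ref{lem2059mon} supplies the boundary $C^{1,1}$ regularity, and the dyadic iteration proceeds in direct parallel with the interior case.
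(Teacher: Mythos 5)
Your strategy is the same as the paper's: freeze the coefficients on the smoothed half-domain $\cD_t$, split $v=v_1+v_2$, estimate $v_1$ by the half-ball version of Lemma~\ref{lem01}, control $v_2$ by Lemma~\ref{lem2059mon} (your observation that the trace of $v_2$ on the flat portion is $\mathbf L\nu\cdot\nu$, which is affine and hence admissible for that lemma, is a correct and careful point), and then run the DEK18/KL21 iteration. So the architecture is right.

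There is, however, one concrete gap: your $\ell^{kl}$ copies the interior formula by taking the constant term to be $(f^{kl})_{B_t^{+}(x_0')}$. In the interior case this works because $\fint_{B_t(x_0)}(x-x_0)\,dx=0$, so $f^{kl}-(Df^{kl})_{B_t}\cdot(x-x_0)-(f^{kl})_{B_t}$ has zero mean and the Poincar\'e inequality gives the bound by $t\,\omega_{D\mathbf f}(t)$. Over a half-ball the centroid is not the center: with your choice the quantity $f^{kl}-(Df^{kl})_{B_t^{+}(x_0')}\cdot(x-x_0')-(f^{kl})_{B_t^{+}(x_0')}$ differs from its mean-zero correction by the constant $(Df^{kl})_{B_t^{+}(x_0')}\cdot\fint_{B_t^{+}(x_0')}(x-x_0')\,dx$, whose size is of order $t\,\abs{(D_d f^{kl})_{B_t^{+}(x_0')}}$. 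This is controlled by $t\,\norm{D\mathbf f}_{L^\infty}$ but not by $t\,\omega_{D\mathbf f}(2t)$, so your claimed analogue of \eqref{eq2026mon} does not follow as written, and after the dyadic summation you would pick up an extra term of order $r$ times a quantity depending on $\mathbf f$ itself --- incompatible with the statement, where the constant depends only on $d,\lambda,\Lambda,\omega_{\rm coef}$ and the data enter only through $r\int_0^{2r}\omega_{\rm dat}(t)\,t^{-1}dt$. This is exactly why the paper does not reuse $(f^{kl})_{B_{2t}^{+}}$ but instead chooses constants $m^{kl}$ so that $f^{kl}-(Df^{kl})_{B_{2t}^{+}(x_0)}\cdot(x-x_0)-m^{kl}$ has zero average over the half-ball, which restores the Poincar\'e bound \eqref{eq1025mon}. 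With that correction (and the standard combination of interior and boundary estimates for points at intermediate distance to $T_2$, which you, like the paper, defer to the cited proofs), your argument coincides with the paper's.
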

\begin{proof}
We replicate the proof of \cite[Lemma 2.26]{DEK18} in the same manner as Lemma~\ref{lem1146sat} was derived, focusing only on the main part of the argument.

As in the proof of Lemma \ref{lem1146sat}, our goal is to control the following quantity for $x_0 \in B_{3r/2}^+$ and $0<t\le r/4$:
\[
\phi(x_0, t):= \inf_{q \in \bR} \left(\fint_{B_t(x_0) \cap B_{2r}^+} \abs{v-q}^{\frac12} \right)^2.
\]

If $B_t(x_0)$ does not intersect $\partial\bR^d_+$, then $B_t(x_0) \subset B_{2r}^+$, and 
we follow the proof of Lemma \ref{lem1146sat}.
Otherwise, we replace $B_t(x_0) \cap B_{2r}^+$ with $B_{2t}^{+}(\bar x_0)$, where $\bar x_0$ is the projection of $x_0$ onto $\partial \bR^d_+$, and replicate the argument from \cite{DEK18}.
Thus, we only need to consider here the case where $x_0 \in \partial \bR^d_+$, so that $B_{t}(x_0)\cap B_{2r}^+=B_{t}^+(x_0)$.

We introduce the following notations:
\[
\mathbf F= \mathbf{f} -\mathbf A (\vec a_j \cdot x + b_j),\quad
\vec G=\vec g+ (\vec a_j \cdot x + b_j) \vec b,\quad
 H=h-(\vec a_j \cdot x + b_j)c.
\]

For $x_0 \in T_{3r/2}$ and $0<t \le r/4$, let $\bar{\mathbf A}$ and $\bar{\vec g}$ denote the averages of $\mathbf A$ and $\vec g$, respectively, over the half ball $B_{2t}^+(x_0)$.
Define $\mathbf L=(\ell^{kl})$ as follows:
\[
\ell^{kl}(x)=(D f^{kl})_{B_{2t}^+(x_0)}\cdot (x-x_0)+m^{kl} -(a^{kl})_{B_{2t}^+(x_0)} (\vec a_j \cdot x+b_j),
\]
where the constant $m^{kl}$ is chosen so that
\[
\int_{B_{2t}^+(x_0)} f^{kl}(y)- (D f^{kl})_{B_{2t}^+(x_0)}\cdot (y-x_0) -m^{kl}\,dy=0.
\]

We decompose $v$ as $v=v_1+v_2$, where $v_1 \in L^p(\mathcal{D}_t(x_0))$ (for some $p>1$) is the solution of the following problem:
\[
\left\{
\begin{aligned}
\dv^2(\bar{\mathbf A}v_1) &= \dv^2 \left(\mathbf{F}-\mathbf{L}-(\mathbf{A}- \bar{\mathbf A})v\right)+\dv(\vec G- \bar{\vec g}+\vec b v)+H-cv\;\mbox{ in }\;\mathcal{D}_t(x_0),\\
(\bar{\mathbf A}\nu\cdot \nu)v_1&=\left(\mathbf{F}-\mathbf{L}-(\mathbf{A}- \bar{\mathbf A}) v\right)\nu\cdot \nu \;\mbox{ on }\;\partial \mathcal{D}_t(x_0).
\end{aligned}
\right.
\]
Then, similar to \eqref{eq1649sat} (cf. \eqref{eq2025mon}), we obtain the following estimate:
\begin{multline}				\label{eq1458mon}
\left(\fint_{B_t^+(x_0)} \abs{v_1}^{\frac12} \right)^{2} \lesssim \fint_{B_{2t}^+(x_0)}\abs{\mathbf F - \mathbf L}+ \left(\fint_{B_{2t}^+(x_0)}\abs{\mathbf A-\bar{\mathbf A}}\right)\norm{v}_{L^\infty(B_{2t}^+(x_0))}
+ t \fint_{B_{2t}^+(x_0)} \abs{\vec G-\bar{\vec g}}\\
+\left(t \fint_{B_{2t}^+(x_0)}\abs{\vec b}\right)\norm{v}_{L^\infty(B_{2t}^+(x_0))} + t^2 \fint_{B_{2t}^+(x_0)} \abs{H} +  \left(t^2 \fint_{B_{2t}^+(x_0)}\abs{c}\right)\norm{v}_{L^\infty(B_{2t}^+(x_0))},
\end{multline}
where we used $B_{t}^+(x_0) \subset \mathcal{D}_t(x_0) \subset B_{2t}^+(x_0)$.

By definitions of $\mathbf F$, $\vec G$, and $H$, and applying the Poincar\'e inequality (cf.  \eqref{eq1025mon}), we obtain:
\begin{align*}
\fint_{B_{2t}^+(x_0)}\abs{\mathbf F-\mathbf L} &\le C t \fint_{B_{2t}^+(x_0)}\abs{D\mathbf f-(D\mathbf f)_{B_{2t}^+(x_0)}}+ (2r\abs{\vec a_j} +\abs{b_j})\fint_{B_{2t}^+(x_0)}\abs{\mathbf A-\bar{\mathbf A}},\\
t\fint_{B_{2t}^+(x_0)}\abs{\vec G-\bar{\vec g}} &\le t \fint_{B_{2t}^+(x_0)}\abs{\vec g-\bar{\vec g}}+ (2r\abs{\vec a_j} +\abs{b_j}) t\fint_{B_{2t}^+(x_0)}\abs{\vec b},\\	
t^2\fint_{B_{2t}^+(x_0)}\Abs{H} &\le t^2 \fint_{B_{2t}^+(x_0)}\abs{h}+ (2r\abs{\vec a_j} +\abs{b_j})t^2 \fint_{B_{2t}^+(x_0)}\abs{c}.
\end{align*}
Thus, from \eqref{eq1458mon}, we obtain the following (cf. \eqref{eq2026mon}):
\begin{equation}	\label{eq1200wed}
\left(\fint_{B^+_t(x_0)} \abs{v_1}^{\frac12} \right)^{2} \le C \omega_{\rm coef}(2t)\,\norm{v}_{L^\infty(B_t^{+}(x_0))} + Cr \omega_{\rm dat}(2t) + C(r\abs{\vec a_j}+\abs{b_j}) \omega_{\rm coef}(2t),
\end{equation}
where $C=C(d, \lambda,\Lambda)$.

On the other hand, since $\dv^2 \mathbf{L}=0$ and $\dv \bar{\vec g}=0$, it follows that $v_2=v-v_1$ satisfies
\[
\dv^2(\bar{\mathbf A}v_2) =0 \;\text{ in }\; B^+_t(x_0),\quad  (\bar{\mathbf A} \nu\cdot \nu)v_2=0\;\text{ on }\;T_t(x_0).
\]
Applying Lemma \ref{lem2059mon} with scaling, we obtain
\[
\norm{D^2 v_2}_{L^\infty(B_{t/2}^+(x_0))}\le C t^{-2-2d} \norm{v_2}_{L^{1/2}(B_{t}^+(x_0))}.
\]
Combining this with the interpolation inequality yields
\[
\norm{D v_2}_{L^\infty(B_{t/2}^+(x_0))}\le C t^{-1-2d}\norm{v_2}_{L^{1/2}(B_{t}^+(x_0))}.
\]
Notably, the above estimates remain valid if $v_2$ is replaced by $v_2 - \ell$ for any affine function $\ell$.
In particular, we have
\[
\norm{D v_2}_{L^\infty(B_{t/2}^+(x_0))}\le C t^{-1} \norm{v_2-c}_{L^{1/2}(B_{t}^+(x_0))},\quad \forall c \in \bR.
\]
The remainder of the proof involves a standard modification of the argument in \cite[Lemma 2.26]{DEK18}, which we leave to the reader.
\end{proof}

By using Lemma~\ref{lem0921tue} in place of Lemma~\ref{lem1146sat}, and repeating the same proof as in Section~\ref{sec3}, we observe that there exists $r_0=r_0(d, \lambda, \Lambda, \omega_{\rm coef}, \beta) \in (0,\frac12]$ such that
\begin{equation}			\label{eq2144mon}
\sup_{j \ge 1} M_j^+(2r_0) \le \frac{C}{r_0} \fint_{B_{4r_0}^+} \abs{u}+C \int_0^{2r_0} \frac{\tilde \omega_{\rm dat}(t)}{t}\,dt,
\end{equation}
where $C=C(d, \lambda, \Lambda, \omega_{\rm coef}, \beta)$.

Then, it follows from \eqref{eq1748mon} and \eqref{eq2144mon} that the sequence $\set{\vec a_j}$ converges to some $\hat{\vec a} \in\bR^d$, as it is a Cauchy sequence in $\bR^d$.
Moreover, recalling \eqref{eq1918tue}, we obtain
\begin{multline}			\label{eq2155mon}
\abs{\vec a_j -\hat{\vec a}} + \frac{\abs{b_j}}{\kappa^j r_0}  \le C \left( \kappa^{\beta j}+\int_0^{2\kappa^j r_0} \frac{\tilde \omega_{\rm coef}(t)}{t}\,dt \right) \frac{1}{r_0} \fint_{B_{4r_0}^+} \abs{u}\\
+C \left(\int_0^{2r_0} \frac{\tilde \omega_{\rm dat}(t)}{t}\,dt \right) \int_0^{2\kappa^j r_0} \frac{\tilde \omega_{\rm coef}(t)}{t}\,dt+ C \int_0^{2\kappa^j r_0} \frac{\tilde \omega_{\rm dat}(t)}{t}\,dt,
\end{multline}
where $C=C(d, \lambda, \Lambda,\omega_{\rm coef}, \beta)$.

Finally, similar to \eqref{eq2010sun}, we obtain from Lemma~\ref{lem0921tue} and \eqref{eq2155mon} that
\begin{equation}		\label{eq1652tue}
\frac{1}{r} \norm{u-\hat{\vec a}\cdot x}_{L^\infty(B_r^+)}  \le \varrho_{\rm coef}(r) \left\{\frac{1}{r_0} \fint_{B_{4r_0}^+} \abs{u} + \varrho_{\rm dat}(r_0)\right\}+\varrho_{\rm dat}(r),\quad  \forall r \in (0, r_0/4],
\end{equation}
where $\varrho_{\rm coef}(\cdot)$ and $\varrho_{\rm dat}(\cdot)$ are moduli of continuity defined similarly to \eqref{eq2011sun}.

In particular, \eqref{eq1652tue} implies that $u$ is differentiable at $0$, and from \eqref{eq1749mon} and \eqref{eq2144mon}, we obtain
\begin{equation}		\label{eq1107tue}
\abs{D u (0)} \le C \left(\frac{1}{r_0} \fint_{B_{4r_0}^+} \abs{u}+\int_0^{2r_0} \frac{\tilde \omega_{\rm dat}(t)}{t}\,dt  \right),
\end{equation}
where $C=C(d, \lambda, \Lambda, \omega_{\rm coef},\beta)$.\qed

\subsection{General case.}		\label{sec:c1alpha}
In this subsection, we consider the general case where $\partial\Omega$ is $C^{1,\alpha}$ for some $\alpha \in (0,1)$.  
To address this case, we introduce a special mixed Lebesgue space.
For $Q \subset \bR^d$, we say that $f \in L_{x'}^{p}L_{x_d}^{q}(Q)$ if
\[
\norm{f}_{L^{p}_{x'}L^{q}_{x_d}(Q)} = \left(\int_{\bR} \left(\int_{\bR^{d-1}} \abs{f \mathbbm{1}_Q(x',x_d)}^p\,dx'\right)^{q/p}\,dx_d\right)^{1/q}<\infty.
\]
Here, $x' \in \mathbb{R}^{d-1}$ and $x_d \in \mathbb{R}$.
When $p=\infty$ or $q=\infty$, the integrals are replaced by the essential supremum. 

Let $\Omega$ be a bounded $C^{1,\alpha}$ domain, and let $u$ be a bounded weak solution of
\[
L^* u=\dv^2 \mathbf{f}+\dv \vec g+ h\quad\text{in }\;\Omega,\qquad
u=\frac{\mathbf f \nu \cdot \nu}{\mathbf A \nu \cdot \nu}\quad\text{on }\;\partial\Omega.
\]

We flatten the boundary using the regularized distance function.
The regularized distance function $\phi$ on $\Omega$, introduced in \cite{Lie85}, satisfies the property that $\phi(x)$ is comparable to $\text{dist}\,(x,\partial\Omega)$ near the boundary, and $\phi \in C^{1,\alpha}(\overline{\Omega})\cap C^\infty(\Omega)$.

Without loss of generality, we assume $x^o=0\in \partial\Omega$ and that the $x_d$-direction is the normal direction at $0$.
We flatten the boundary $\partial\Omega$ near $0$ by introducing the change of variables $y=\vec\Phi(x)$, where
\[
\vec \Phi(x)=(\Phi^1(x),\ldots, \Phi^d(x))=(x_1,\ldots, x_{d-1}, \phi(x)).
\]
We denote
\[
\Omega_{r_0}:=\Omega \cap \Set{x: \max_{1\le i \le d}\,\abs{x_i}<r_0},\quad \widetilde\Omega_{r_0}:=\vec \Phi(\Omega_{r_0}) \subset \bR^d_+,
\]
and assume that $r_0$ is chosen so that $\vec \Phi:  \Omega_{r_0} \to \widetilde\Omega_{r_0}$ is a bijection.
We may also assume that $r_0 \ge 4$ by a similar reasoning as in Remark \ref{rmk_rescaling}.

Let $\vec \Psi$ denote the inverse of $\vec \Phi$, which has the form
\[
\vec \Psi(y)=(\Psi^1(y),\ldots, \Psi^d(y))=(y_1,\ldots, y_{d-1}, \psi(y)).
\]

Note that $\vec \Psi \in C^{1,\alpha}(\widetilde\Omega_{r_0})$.
Furthermore, observe that
\[
\det D\vec\Psi(y)= \frac{\partial \psi(y)}{\partial y_d}=D_d \psi(y)>0.
\]
Let $\tilde u(y):=u(\vec \Psi(y))$.
Then it satisfies
\[
\tilde L^*\tilde u= D_{kl}(\tilde a^{kl} \tilde u) - D_k(\tilde b^k \tilde u) +\tilde c \tilde u = D_{ij} \tilde f^{kl} + D_k g^k + \tilde h \;\;\text{ in } \; \widetilde \Omega_{r_0},
\]
where
\begin{equation}			\label{eq1237thu}
\begin{aligned}
\tilde a^{kl}(y)&=a^{ij}(\vec\Psi(y)) D_i \Phi^k(\vec\Psi(y))D_j \Phi^l(\vec\Psi(y)) D_d \psi(y),\\
\tilde b^k(y)&= \tilde b^k_1(y)+ \tilde b^k_2(y)\\
&:=b^{i}(\vec\Psi(y)) D_i \Phi^k(\vec\Psi(y)) D_d  \psi(y)+a^{ij}(\vec\Psi(y)) D_{ij} \Phi^k(\vec\Psi(y)) D_d  \psi(y),\\
\tilde c(y)&= c(\vec\Psi(y))  D_d  \psi(y),\\
\tilde f^{kl}(y) &= f^{ij}(\vec\Psi(y))D_i \Phi^k(\vec\Psi(y)) D_j \Phi^l(\vec\Psi(y)) D_d  \psi(y),\\
\tilde g^k(y)&=\tilde g^k_1(y)+\tilde g^k_2(y)\\
&:= -g^{i}(\vec\Psi(y)) D_i \Phi^k(\vec\Psi(y)) D_d \psi(y)-f^{ij}(\vec\Psi(y)) D_{ij}\Phi^k(\vec\Psi(y)) D_d  \psi(y),\\
\tilde h(y) &= h(\vec\Psi(y)) D_d  \psi(y).
\end{aligned}
\end{equation}
Notice that $D_{ij} \Phi^k=\partial^2 x_k/\partial x_i \partial x_j=0$ for $k=1,\ldots, d-1$, and  $D_{ij} \Phi^d=D_{ij} \phi$.
Therefore, by \cite[Lemma 2.4]{Safonov}, we have
\begin{equation}			\label{eq1317fri}
\abs{D^2 \vec \Phi (\vec \Psi(y))} = \abs{D^2 \phi (\vec \Psi(y))}  \le C y^{\alpha-1}_d.
\end{equation}

The boundary condition is transformed as
\[
\tilde a^{dd}u= \tilde f^{dd}\;\;\text{ on } \; \partial \widetilde \Omega_{r_0} \cap \partial \bR^d_+.
\]

Recall that $\vec \Phi$ and $\vec \Psi$ are $C^{1,\alpha}$ mappings.
Therefore, it follows from \eqref{eq1237thu} and \eqref{eq1317fri} that $\tilde{\mathbf A}=(\tilde a^{kl})\in \mathrm{DMO}(\widetilde \Omega_{r_0})$, $\tilde{\vec b}_1=(\tilde b_1^1,\ldots, \tilde b_1^d) \in L^{p_0}(\widetilde \Omega_{r_0})$, $\tilde{\vec b}_2=(\tilde b^1_2,\ldots, \tilde b^d_2) \in L_{x'}^{\infty}L_{x_d}^{q_0}(\widetilde \Omega_{r_0})$ with $q_0 \in (1,\frac{1}{1-\alpha})$, and $\tilde c \in L^{p_0/2}(\widetilde \Omega_{r_0})$.

Then, as in Remark \ref{rmk_omega}, H\"older's inequality implies that the following mapping satisfies the Dini condition:
\begin{equation}			\label{eq1648fri}
r\mapsto \omega_{\tilde{\mathbf A}}(r)+ r\sup_{x \in \Omega} \fint_{B_r(y^o) \cap \widetilde \Omega_{r_0}}\abs{\tilde{\vec b}} + r^2 \sup_{x \in \Omega} \fint_{B_r(y^o) \cap \widetilde\Omega}\abs{\tilde c}.
\end{equation}

Observe from \eqref{eq1237thu} that $\tilde {\mathbf f} \in C^{\alpha}(\widetilde \Omega_{r_0})$.
Also, it follows from \cite[Lemma 2.1]{DEK18} that $\tilde{\vec g}_1=(\tilde g^1_1,\ldots, \tilde g^d_1) \in \mathrm{DMO}(\widetilde \Omega_{r_0})$, and thus is bounded.
Moreover, similar to the case of $\tilde{\vec b}_2$, we have $\tilde{\vec  g}_2 \in L_{x'}^{\infty}L_{x_d}^{q_0}(\widetilde \Omega_{r_0})$ with $q_0 \in (1,\frac{1}{1-\alpha})$.
It is clear that $\tilde h \in L^{p_0}(\widetilde \Omega_{r_0})$.

Therefore, we conclude that the following mapping satisfies the Dini condition:
\begin{equation}			\label{eq1649fri}
r\mapsto \omega_{\tilde{\mathbf f}}(r)+ r\sup_{x \in \Omega} \fint_{B_r(y^o) \cap \widetilde \Omega_{r_0}}\abs{\tilde{\vec g}} + r^2 \sup_{x \in \Omega} \fint_{B_r(y^o) \cap \widetilde\Omega}\abs{\tilde h}.
\end{equation}

Observe that $\tilde{\vec b}$, $\tilde c$, $\tilde{\vec g}$, and $\tilde h$ all belong to $L^p(\widetilde\Omega)$ for some $p>1$, and that $\tilde u \in L^\infty(\widetilde\Omega)$.
Under these conditions, and noting that the mappings in \eqref{eq1648fri} and \eqref{eq1649fri} satisfy the Dini condition, the proof of \cite[Theorem 1.8]{DEK18} remains valid.
Consequently, $\tilde u$ is continuous in the closure of $\widetilde \Omega_{r_0/2}$.

On the other hand, we can still retain some control over the mean oscillations of $D\tilde{\mathbf f}$ and $\tilde{\vec g}$ by utilizing the fact that $f^{ij}(0)=0$, as we demonstrate below.

We use the following notation for the mean oscillations of $f$ in $B_t(y^o) \cap \widetilde \Omega_{r_0}$:
\[
\omega_{f}(t; y^o):= \fint_{B_t(y^o)\cap \widetilde \Omega_{r_0}} \,\Abs{f-(f)_{B_t(y^o) \cap \widetilde \Omega_{r_0}}}.
\]
Let us write:
\begin{align}					\nonumber
D \tilde f^{kl}(y) &= D_m f^{ij}(\vec\Psi(y)) D \Psi^m(y) D_i \Phi^k(\vec\Psi(y)) D_j \Phi^l(\vec\Psi(y)) D_d  \psi(y)\\		\nonumber
&\qquad+ f^{ij}(\vec\Psi(y))D_{im} \Phi^k(\vec\Psi(y)) D \Psi^m(y) D_j \Phi^l(\vec\Psi(y)) D_d  \psi(y)\\						\nonumber
&\qquad+f^{ij}(\vec\Psi(y))D_i \Phi^k(\vec\Psi(y)) D_{j m} \Phi^l(\vec\Psi(y)) D \Psi^m(y) D_d  \psi(y)\\						\nonumber
&\qquad+ f^{ij}(\vec\Psi(y))D_i \Phi^k(\vec\Psi(y)) D_j \Phi^l(\vec\Psi(y)) DD_{d}  \psi(y)\\
			\label{eq1712thu}
&=:\vec F_1^{kl}+\vec F_2^{kl}+\vec F_3^{kl}+\vec F_4^{kl}.
\end{align}
From \eqref{eq1712thu}, we observe that
\[
\omega_{D \tilde f^{kl}}(t; y^o)=\fint_{B_t(y^o)\cap \widetilde \Omega_{r_0}} \Abs{D \tilde f^{kl}-(D \tilde f^{kl})_{B_t(y^o)\cap \widetilde \Omega_{r_0}}} \le  \sum_{i=1}^4 \fint_{B_t(y^o)\cap \widetilde \Omega_{r_0}} \Abs{\vec F_i^{kl}-(\vec F_i^{kl})_{B_t(y^o)\cap \widetilde \Omega_{r_0}}}.
\]
Since $D \mathbf{f} \in \mathrm{DMO}$ and both $\vec \Phi$ and $\vec \Psi$ are $C^{1,\alpha}$ functions, by \cite[Lemma 2.1]{DEK18}, we have
\begin{equation}			\label{eq1142fri}
\fint_{B_t(y^o)\cap \widetilde \Omega_{r_0}} \Abs{\vec F_1^{kl}-(\vec F_1^{kl})_{B_t(y^o)\cap \Omega_{r_0}}}  \le C \omega_{D\mathbf f}(Ct) +Ct^\alpha.
\end{equation}
To handle the remaining terms $\vec F_2^{kl}$, $\vec F_3^{kl}$, and $\vec F_4^{kl}$, we utilize the assumption that  $f^{ij}(0)=0$, as follows.
For $y \in B_t(y^o) \cap \widetilde\Omega_{r_0}$, observe that
\[
\abs{f^{ij}(\vec \Psi(y))}=\abs{f^{ij}(\vec \Psi(y))-f^{ij}(\vec \Psi(0))} \le \norm{D(f^{ij}\circ \vec \Psi)}_\infty  \abs{y} \le C (t+\abs{y^o}).
\]
This inequality provides an upper bound for the terms involving $f^{ij}(\vec \Psi(y))$.
We recall that both $\vec \Phi$ and $\vec \Psi$ are $C^{1,\alpha}$ functions, and the estimate \eqref{eq1317fri} applies.
Additionally, since $\vec \Psi=\vec \Phi^{-1}$, it follows from \eqref{eq1317fri} that
\[
\abs{D^2\psi(y)} \le C y_d^{\alpha-1}.
\]
Using these observations, we now derive the bound
\begin{align*}
\sum_{i=2}^4 \fint_{B_t(y^o)\cap \widetilde\Omega_{r_0}} \Abs{\vec F_i^{kl}-(\vec F_i^{kl})_{B_t(y^o)\cap \widetilde\Omega_{r_0}}} & \le 2 \sum_{i=2}^4 \fint_{B_t(y^o)\cap \widetilde\Omega_{r_0}} \abs{\vec F_i^{kl}}\\
&\le C  (t+\abs{y^o}) t^{-d} \int_{B_t(y^o)\cap \widetilde\Omega_{r_0}} y_d^{\alpha-1}\,dy.
\end{align*}

In the case when $\dist(y^o, \partial \bR_+^d) >2t$, we have $y_d^{\alpha-1} \le t^{\alpha-1}$ in $B_t(y^o)\cap \widetilde\Omega_{r_0}$, and thus we obtain:
\[
\int_{B_t(y^o)\cap \widetilde\Omega_{r_0}} y_d^{\alpha-1}\,dy \le t^{\alpha-1} \abs{B_t}  \le Ct^{\alpha-1+d}.
\]
On the other hand, when $\dist(y^o, \partial \bR_+^d) \le 2t$, we have
\[
\int_{B_t(y^o)\cap \widetilde\Omega_{r_0}} y_d^{\alpha-1}\,dy  \le Ct^{d-1} \int_0^{3t} y_d^{\alpha-1}\,dy_d \le C t^{d-1+\alpha}.
\]
In both cases, we obtain
\[
\sum_{i=2}^4 \fint_{B_t(y^o)\cap \widetilde\Omega_{r_0}} \Abs{\vec F_i^{kl}-(\vec F_i^{kl})_{B_t(y^o)\cap \widetilde\Omega_{r_0}}} \le C (t+\abs{y^o}) t^{\alpha-1}.
\]

Therefore, together with \eqref{eq1142fri}, we obtain:
\begin{equation}			\label{eq1228fri}
\omega_{D \tilde{\mathbf f}}(t; y^o) \le C \omega_{D \mathbf{f}}(Ct)+ Ct^\alpha+ C(t+\abs{y^o})  t^{\alpha-1}.
\end{equation}
By similar reasoning, we derive the following as well:
\begin{equation}			\label{eq1227fri}
\omega_{\tilde{\vec g}}(t; y^o) \le 
C \omega_{\vec g}(Ct)+Ct^\alpha+ C (t+\abs{y^o}) t^{\alpha-1}.
\end{equation}

We now adapt the proof from Section \ref{sec:flat} to the current setting as follows.
We decompose $\tilde u$ as $\tilde u=v+w$, where $w$ is the $L^p$ weak solution, for some $p>1$, of the problem \eqref{eq1009sat} with the coefficients and data replaced by their counterparts denoted with tildes.
By setting $t=2r$ and $y^o=0$ in \eqref{eq1228fri} and \eqref{eq1227fri}, we obtain: 
\[
\fint_{B_{2r}^+} \abs{D\tilde{\mathbf f}-(D\tilde{\mathbf f})_{B_{2r}^+}} \le C \omega_{D \mathbf{f}}(Cr)+Cr^{\alpha},\quad
\fint_{B_{2r}^+} \abs{\tilde{\vec g}-(\tilde{\vec g})_{B_{2r}^+}} \le C \omega_{\vec g}(Cr)+Cr^{\alpha}.
\]

Thus, we observe that \eqref{eq2017fri} continues to hold when $u$ is replaced by $\tilde u$, and $\omega_{\rm coef}$ and $\omega_{\rm dat}$ are replaced by alternative Dini functions, denoted as $\tilde \omega_{\rm coef}$ and $\tilde \omega_{\rm dat}$.

In the proof of Lemma \ref{lem0921tue}, for $x_0 \in B_{3r/2}^+$ and $0<t\le r/4$, we consider two cases: when $B_t(x_0) \subset B_{2r}^+$ and when $x_0 \in T_{3r/2}$.

\vskip 5pt
\noindent
\textbf{Case 1:} $B_t(x_0) \subset B_{2r}^+$
\vskip 5pt

Observe that by setting $y^o=x_0$ in \eqref{eq1228fri} and \eqref{eq1227fri}, and noting that $t \le r/4$, we obtain:
\[
\fint_{B_t(x_0)} \abs{D\tilde{\mathbf f}-(D\tilde{\mathbf f})_{B_t(x_0)}} \le C \omega_{D \mathbf{f}}(Ct)+ Ct^\alpha + Crt^{\alpha-1},
\]
\[
\fint_{B_t(x_0)} \abs{\tilde{\vec g}-(\tilde{\vec g})_{B_t(x_0)}} \le C \omega_{\vec g}(Ct)+Ct^\alpha+Crt^{\alpha-1}.
\]
Thus, we have (recalling that $t\le r/4$):
\begin{align*}
t \fint_{B_t(x_0)} \abs{D\tilde{\mathbf f}-(D\tilde{\mathbf f})_{B_t(x_0)}} + t \fint_{B_t(x_0)} \abs{\tilde{\vec g}-(\tilde{\vec g})_{B_t(x_0)}}  & \le Ct \omega_{D \mathbf{f}}(Ct)+Ct \omega_{\vec g}(Ct)+Crt^{\alpha}\\
&\le Cr(\omega_{D \mathbf{f}}(Ct) + \omega_{\vec g}(Ct)+ t^\alpha).
\end{align*}

\vskip 5pt
\noindent
\textbf{Case 2:} $x_0 \in T_{3r/2}$
\vskip 5pt

In this case, by setting $y^o=x_0$ and replacing $t$ with $2t$ in \eqref{eq1228fri} and \eqref{eq1227fri}, and noting that $t \le r/4$, we obtain:
\[
\fint_{B_{2t}^+(x_0)} \abs{D\tilde{\mathbf f}-(D\tilde{\mathbf f})_{B_{2t}^+(x_0)}} \le C \omega_{D \mathbf{f}}(Ct)+ Ct^\alpha + Crt^{\alpha-1},
\]
\[
\fint_{B_{2t}^+(x_0)} \abs{\tilde{\vec g}-(\tilde{\vec g})_{B_{2t}^+(x_0)}} \le C \omega_{\vec g}(Ct)+Ct^\alpha +Crt^{\alpha-1}.
\]
Thus, we have (recalling that $t\le r/4$):
\begin{align*}
t \fint_{B_{2t}^+(x_0)} \abs{D\tilde{\mathbf f}-(D\tilde{\mathbf f})_{B_{2t}^+(x_0)}} + t \fint_{B_{2t}^+(x_0)} \abs{\tilde{\vec g}-(\tilde{\vec g})_{B_{2t}^+(x_0)}}  & \le Ct \omega_{D \mathbf{f}}(Ct)+Ct \omega_{\vec g}(Ct)+Crt^{\alpha}\\
&\le Cr(\omega_{D \mathbf{f}}(Ct) + \omega_{\vec g}(Ct)+ t^\alpha).
\end{align*}

Therefore, we observe that \eqref{eq2026mon} and \eqref{eq1200wed} remain valid when $\omega_{\rm coef}(\cdot)$ and $\omega_{\rm dat}(\cdot)$ are replaced by other functions satisfying the Dini condition -- namely, $\widehat \omega_{\rm coef}(\cdot)$ and $\widehat \omega_{\rm dat}(\cdot)$.
We note that $\widehat \omega_{\rm coef}$ and $\widehat \omega_{\rm dat}$ are determined by $\omega_{\rm coef}$ and $\omega_{\rm dat}$, respectively, as well as by the $C^{1,\alpha}$ properties of $\Omega$.

The remainder of the proof in Section \ref{sec:flat} remains unchanged.
In particular, we obtain estimates analogous to  \eqref{eq1652tue} and \eqref{eq1107tue} for $\tilde u=u\circ \vec \Psi$.
The theorem then follows from the observation that $u=\tilde u \circ \vec\Phi$, where $\vec \Phi$ is a $C^{1,\alpha}$ mapping. 
\qed

\begin{remark}
It is worth noting that the argument in Section \ref{sec:c1alpha} shows that \cite[Theorem 1.8]{DEK18} remains valid under the weaker assumption that $\Omega$ is a $C^{1,\alpha}$ domain for some $\alpha \in (0,1)$.
\end{remark}

\section{Applications to Green's function estimates}	\label{sec5}
In this section, we consider an elliptic operator $L_0$ in non-divergence form:
\begin{equation}			\label{eq1747sat}
L_0 u = a^{ij}D_{ij} u=\tr(\mathbf A D^2u),
\end{equation}
where $\mathbf A=(a^{ij})$ satisfies the condition \eqref{ellipticity-nd}.
We further assume that $\mathbf A$ has Dini mean oscillation and that $\Omega$ is a bounded $C^{1,\alpha}$ domain, for some $0<\alpha<1$, in $\bR^d$ with $d\ge 3$.
Here, we do not discuss Green's functions in two dimensions due to their logarithmic singularities, which require a distinct approach as outlined in \cite{DK21}.

Let $G(x,y)$ and $G^*(x,y)$ denote the Green's functions of the operators $L_0$ and $L_0^*$ in $\Omega$, respectively.
See \cite{HK20, KL21} for the construction of Green's functions.

We recall that for any $x,y\in \Omega$ with $x\neq y$, we have
\begin{equation}			\label{eq2005tue}
G(x,y)=G^*(y,x)
\end{equation}
and the following pointwise estimate holds:
\begin{equation}		\label{230323_eq1a}
\abs{G(x,y)} \le C_0\abs{x-y}^{2-d},
\end{equation}
where $C_0=C_0(d, \lambda, \Lambda, \omega_{\mathbf A},\Omega)>0$.
More precisely, the constant $C_0$ depends on $d$, $\lambda$, $\Lambda$, $\omega_{\mathbf A}$, the $C^{1,\alpha}$ character of $\partial\Omega$, and $\diam(\Omega)$.

As an application of Theorem~\ref{thm-main02}, we establish refined pointwise estimates for Green's function.

\begin{theorem}		\label{230321_thm1}
Let $\Omega$ be a bounded $C^{1,\alpha}$ domain, for some $0<\alpha<1$, in $\bR^d$ with $d \ge 3$.
Let $L_0$ be as defined in \eqref{eq1747sat}.
Assume that $\mathbf A$ satisfies the condition \eqref{ellipticity-nd} and has Dini mean oscillation.
Then, the Green's function $G(x,y)$ of $L_0$ in $\Omega$ satisfies the following pointwise estimates:
\begin{equation}		\label{230322_eq1}
\abs{G(x,y)} \le \frac{C}{\abs{x-y}^{d-2}}\left(1 \wedge \frac{d_x}{\abs{x-y}}\right)\left(1 \wedge \frac{d_y}{\abs{x-y}}\right), \quad  x\neq y,
\end{equation}
where $C=(d, \lambda, \Lambda, \omega_{\mathbf A},\Omega)>0$.
\end{theorem}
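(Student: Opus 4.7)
The overall strategy is to combine the basic pointwise bound \eqref{230323_eq1a} with two boundary decay estimates: a $d_y$-factor extracted via Theorem~\ref{thm-main02} applied to the adjoint equation in the $y$-variable, and a $d_x$-factor extracted via boundary $C^{1,\textrm{Dini}}$ regularity for the non-divergence operator $L_0$ in the $x$-variable (classical for DMO coefficients on $C^{1,1}$ domains; cf.~\cite{CW23} and the non-divergence analogue of Theorem~\ref{thm-main02}). The proof splits into cases according to the relative sizes of $d_x$, $d_y$, and $\abs{x-y}$; when $d_x \wedge d_y \gtrsim \abs{x-y}$ the bound \eqref{230323_eq1a} already gives the required estimate, so I focus on the remaining regimes.

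To produce the $d_y$-factor, assume $d_y < \abs{x-y}/2$ and let $y_0 \in \partial\Omega$ realize $d_y$. Set $R := \abs{x-y}/8$. The function $v(z) := G(x,z) = G^*(z,x)$ is a bounded weak solution of $L_0^* v = 0$ in $B_{4R}(y_0) \cap \Omega$ vanishing on the boundary portion, with $v(y_0) = 0$ and all inhomogeneous data equal to zero. By \eqref{230323_eq1a}, $\abs{v} \lesssim \abs{x-y}^{2-d}$ on $B_{4R}(y_0)\cap\Omega$ since $\abs{z-x} \gtrsim \abs{x-y}$ there. Rescaling $B_{4R}(y_0)\cap\Omega$ to unit scale (which preserves ellipticity and only improves $\omega_{\mathbf A}$, cf.~Remark~\ref{rmk_rescaling}), Theorem~\ref{thm-main02} with $\varrho_{\rm dat}\equiv 0$ yields
\[
\abs{G(x,y)} = \abs{v(y) - v(y_0)} \lesssim d_y\, \abs{Dv(y_0)} + d_y\, \varrho_{\rm coef}(d_y)\cdot \abs{x-y}^{1-d} \lesssim \frac{d_y}{\abs{x-y}^{d-1}},
\]
where I used the derivative bound $\abs{Dv(y_0)} \lesssim R^{-1}\fint_{B_{4R}(y_0)\cap\Omega} \abs{v} \lesssim \abs{x-y}^{1-d}$. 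The analogous argument in the $x$-variable, applied to $w(z) := G(z,y)$ (which satisfies the non-divergence equation $L_0 w = 0$ away from $y$ and vanishes on $\partial\Omega$), yields $\abs{G(x,y)} \lesssim d_x / \abs{x-y}^{d-1}$ whenever $d_x < \abs{x-y}/2$.

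To obtain both factors simultaneously in the regime $d_x, d_y < \abs{x-y}/2$, I chain the two estimates: in the $x$-variable argument above, replace the crude bound $\abs{w} \lesssim \abs{x-y}^{2-d}$ with the improved estimate from the $y$-variable step. Let $x_0 \in \partial\Omega$ realize $d_x$ and take $R \sim \abs{x-y}$ again. For $\tilde z \in B_{4R}(x_0)\cap\Omega$ we still have $\abs{\tilde z - y} \sim \abs{x-y}$ and $d_y < \abs{\tilde z - y}/2$, so $\abs{w(\tilde z)} = \abs{G(\tilde z, y)} \lesssim d_y / \abs{x-y}^{d-1}$. Feeding this into the derivative bound for $w$ produces $\abs{Dw(x_0)} \lesssim d_y / \abs{x-y}^d$, whence
\[
\abs{G(x,y)} = \abs{w(x) - w(x_0)} \lesssim d_x\, \abs{Dw(x_0)} + \text{lower order} \lesssim \frac{d_x\, d_y}{\abs{x-y}^d}.
\]
The principal technical obstacle is the rescaling needed to apply Theorem~\ref{thm-main02} at the small scale $R = \abs{x-y}/8$: the constants $r_0$ and $C$ in the theorem depend on $\Omega$, so one must verify that dilating $B_{4R}(y_0)\cap\Omega$ to unit scale preserves the ellipticity and DMO hypotheses with controlled moduli and leaves the $C^{1,1}$ character of $\partial\Omega$ intact with comparable constants. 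A secondary point is the invocation of boundary $C^{1,\textrm{Dini}}$ regularity for $L_0$ with DMO coefficients, whose proof runs in parallel to that of Theorem~\ref{thm-main02} and provides the $d_x$-side derivative estimate used above.
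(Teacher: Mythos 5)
Your proposal follows essentially the same route as the paper: extract the factor $d_y/\abs{x-y}$ by applying the boundary regularity result for the adjoint equation to $G^*(\cdot,x)$ at a nearest boundary point $y_0$, then extract the factor $d_x/\abs{x-y}$ by chaining a boundary gradient estimate for the non-divergence equation $L_0 G(\cdot,y)=0$ with the already-improved bound, exactly as in \eqref{230323_eq2} and \eqref{230328_eq4}.

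One technical point deserves attention: you cannot invoke Theorem~\ref{thm-main02} verbatim, because $G^*(\cdot,x)$ is not a global adjoint solution on a bounded $C^{1,1}$ domain --- it solves $L_0^*u=0$ only in $\Omega\setminus\{x\}$, and the truncated domain $B_{4R}(y_0)\cap\Omega$ (rescaled or not) both has Lipschitz corners and, more seriously, carries no adjoint boundary condition on the artificial boundary $\partial B_{4R}(y_0)\cap\Omega$, so the weak formulation underlying Theorem~\ref{thm-main02} fails there. The fix is exactly what the paper does: use the \emph{local} boundary estimate established inside the proof of Theorem~\ref{thm-main02}, namely \eqref{eq2144mon} in the form \eqref{eq1718tue} (cf.\ Remark~\ref{rmk1203tue}), which requires only that $L_0^*u=0$ in $\Omega\cap B_{R/2}(y_0)$ with $u$ vanishing on the adjacent boundary portion. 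Note also that this local estimate is only a Lipschitz-type growth bound $\norm{u}_{L^\infty(B_s(y_0)\cap\Omega)}\lesssim s\,r^{-1}\fint_{B_{2r}(y_0)\cap\Omega}\abs{u}$; neither the existence of $Du(y_0)$ nor the derivative bound is needed, so your Taylor-expansion step can be replaced by this simpler inequality. Similarly, on the $x$-side the paper needs only the classical boundary gradient (barrier-type) estimate \eqref{230328_eq4} for $L_0$ in a $C^{1,1}$ domain, rather than full boundary $C^{1,\mathrm{Dini}}$ regularity with DMO coefficients; your heavier invocation is correct but unnecessary. With the localization handled as above, your argument is sound and coincides with the paper's.
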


\begin{proof}
We shall first prove that
\begin{equation}		\label{230323_eq2}
\abs{G(x,y)} \le \frac{C}{\abs{x-y}^{d-2}} \left(1 \wedge \frac{d_y}{\abs{x-y}}\right),\quad x\neq y.
\end{equation}
It is sufficient to consider the case when $d_y \le \frac14 \abs{x-y}$, since otherwise, \eqref{230323_eq1a} immediately gives \eqref{230323_eq2} with $C=4C_0$.
Let us denote
\[
R:=\abs{x-y}
\]
and choose $y_0 \in \partial\Omega$ such that $\abs{y-y_0}=d_y$.
Then, by the triangle inequality, for $z \in \Omega \cap B_{R/2}(y_0)$, we have
\[
\abs{x-z} \ge \abs{x-y}-\abs{z-y_0}-\abs{y-y_0} \ge \tfrac14 \abs{x-y}.
\]
Therefore, by using \eqref{230323_eq1a}, we obtain
\begin{equation}			\label{eq16.57}
G(x,z) \le 4^{d-2}C_0 \abs{x-y}^{2-d},\quad \forall z \in \Omega \cap B_{R/2}(y_0).
\end{equation}
Additionally, since $u=G^*(\cdot, x)$ satisfies
\[
L^*_0 u=0 \;\text { in }\; \Omega\cap B_{R/2}(y_0),
\]
and $u(y_0)=G^*(y_0, x)=G(x, y_0)=0$, it follows from \eqref{eq2144mon} that we have
\begin{equation}		\label{eq1718tue}
\sup_{0<s \le r} \,\frac{1}{s} \norm{u}_{L^\infty(B_s(y_0)\cap \Omega)} \le \frac{C}{r} \fint_{B_{2r}(y_0)\cap \Omega} \abs{u},
\end{equation}
where $r=\frac14\abs{x-y} \wedge r_0$ and $r_0=r_0(d, \lambda, \Lambda, \omega_{\mathbf A}, \Omega)>0$; see Remark~\ref{rmk1203tue}.

We may assume that $d_y \le r_0$, since otherwise $r_0 < d_y \le \frac14 \abs{x-y}$, and again \eqref{230323_eq2} follows from \eqref{230323_eq1a} with $C=C_0 \diam(\Omega)/r_0$.
Then, since $d_y \le r \le \frac14 \abs{x-y}=R/4$, it follows from \eqref{eq1718tue}, \eqref{eq2005tue}, and \eqref{eq16.57} that
\[
\frac{1}{d_y}\abs{G(x,y)}  \le  \frac{C}{r} \fint_{\Omega \cap B_{2r}(y_0)} \abs{G(x,z)}\,dz \le \frac{4^{d-2}C_0}{r \abs{x-y}^{d-2}},
\]
which yields \eqref{230323_eq2} with $C=4^{d-2}C_0 \max(4, \diam(\Omega)/r_0)$.
We have thus established \eqref{230323_eq2}.

Next, we shall derive \eqref{230322_eq1} from \eqref{230323_eq2}.
As before, it is enough to consider the case when $d_x<\frac14 \abs{x-y}$.
Set $v=G(\cdot, y)$ and take $x_0 \in \partial\Omega$ such that $\abs{x-x_0}=d_x$.

Since $v$ satisfies
\[
L_0 v= a^{ij}D_{ij} v=0 \;\text { in }\; \Omega\cap B_{R/2}(x_0), \quad v=0 \;\text{ on }\; \partial \Omega\cap B_{R/2}(x_0),
\]
by the boundary gradient estimate for elliptic equations, we have
\begin{equation}		\label{230328_eq4}
\abs{v(x)}=\abs{v(x)-v(x_0)} \le  Cd_x R^{-1} \norm{v}_{L^\infty(\Omega\cap B_{R/2}(x_0))}.
\end{equation}
By the triangle inequality, for $z \in \Omega \cap B_{R/2}(x_0)$, we have $\tfrac{1}{4} \abs{x-y} \le \abs{z-y} \le \tfrac{7}{4} \abs{x-y}$.
This, together with \eqref{230328_eq4} and \eqref{230323_eq2}, gives
\[
\abs{G(x,y)} \le \frac{C d_x}{\abs{x-y}^{d-1}}\left(1 \wedge \frac{d_y}{\abs{x-y}}\right).
\]
The proof is complete.
\end{proof}

\begin{remark}
In addition to \eqref{230322_eq1}, we also have
\[
\abs{D_x G(x,y)} \le \frac{C}{\abs{x-y}^{d-1}} \left(1 \wedge \frac{d_y}{\abs{x-y}}\right).
\]
Moreover, if the boundary $\partial \Omega$ is of $C^{2, \rm{Dini}}$, then we have
\[
\abs{D_x^2 G(x,y)} \le \frac{C}{\abs{x-y}^{d}}\left(1 \wedge \frac{d_y}{\abs{x-y}}\right).
\]
These are straightforward consequences of \eqref{230322_eq1} and the standard elliptic estimates for non-divergence form equations, since $L G(\cdot, y)=0$ away from $y$. See \cite{HK20} for details.
\end{remark}

\section{Appendix}
\subsection{Proof of Lemma~\ref{lem01}}
We decompose $u$ as $u=u_1+u_2+u_3$, where $u_1 \in L^p(B)$ is the weak solution of
\[
\dv^2(\mathbf A_0 u_1)= \dv^2 \mathbf f\;\mbox{ in }\; B,\quad
(\mathbf A_0 \nu \cdot \nu) u_1 = \mathbf f \nu\cdot \nu \;\mbox{ on } \; \partial B,
\]
and $u_2 \in L^p(B)$ is the weak solution of
\[
\dv^2(\mathbf A_0 u_2)= \dv \vec g\;\mbox{ in }\; B,\quad
u_2 =0\;\mbox{ on } \; \partial B.
\]
Then, $u_3 \in L^p(B)$ is the weak solution of
\[
\dv^2(\mathbf A_0 u_3)= h\;\mbox{ in }\; B,\quad
u_3 =0\;\mbox{ on } \; \partial B.
\]

We note that the space $L^2$ in \cite[Lemma 2.1]{DK17} can be replaced by any $L^p$ with $p \in (1,\infty)$.
By adjusting the proof of \cite[Lemma 2.23]{DK17} accordingly, we obtain the following estimate:
\[
\Abs{\set{x \in B : \abs{u_1(x)} > t}}  \le \frac{C}{t} \int_{B} \abs{\mathbf f}.
\]

Moreover, a slight modification of the same proof will show that the corresponding estimates hold for $u_2$ and $u_3$.
Indeed, since $\dv^2(\mathbf A_0 u_2)= \dv(\mathbf A_0 Du_2)$, we have $u_2 \in W^{1,p}_0(B)$, and the map $T_2: \vec g \mapsto u_2$ is  a bounded linear operator on $L^p(B)$.
Similarly, since $\dv^2 (\mathbf A_0 u_3) = \tr(\mathbf A_0 D^2 u_3)$, it follows that $u_3 \in W^{2,p}(B)\cap W^{1,p}_0(B)$, and the map $T_3: h \mapsto u_3$ is a bounded linear operator on $L^p(B)$.

It suffices to show that both $T_2$ and $T_3$ satisfy the hypothesis of \cite[Lemma 2.1]{DK17} with $c=2$, where $L^2$ replaced by $L^p$, as noted earlier.

Let us first consider $T_2$.
For $\bar y \in B$ and $0<r<\frac12$, let $\vec b \in L^p(B)$ be supported in $B(\bar y, r) \cap B$ with mean zero.
Let $u \in W^{1,p}_0(B)$ be the solution to the problem
\[
\dv^2(\mathbf A_0 u) = \dv(\mathbf A_0 Du)=\dv  \vec b\;\mbox{ in }\; B;\quad
u=0\;\mbox{ on } \; \partial B.
\]

For any $R\ge 2r$ such that $B\setminus B(\bar y, R) \neq \emptyset$, and for $g \in C^\infty_c((B(\bar y,2R)\setminus B(\bar y,R))\cap B)$, let $v \in W^{2,p'}(B) \cap W^{1,p'}_0(B)$, where $1/p+1/p'=1$, be the solution to the problem
\[
\tr(\mathbf A_0 D^2 v)=  g\;\mbox{ in }\; B;\quad
v=0 \;\mbox{ on } \; \partial B.
\]
We note that the following the identity holds:
\[
\int_{B} u \, g = \int_{B}  \vec b \cdot Dv= \int_{B(\bar y ,r)\cap B} \vec b\cdot (Dv-(Dv)_{B(\bar y ,r)\cap B}).
\]
Since $g = 0$ in $B(\bar y, R) \cap B$, and $r\le R/2$, the standard interior and boundary regularity estimates for constant coefficient elliptic equations, along with the $L^p$ theory, yield
\begin{equation}			\label{eq0718sun}
\norm{D^2 v}_{L^\infty(B(\bar y, r)\cap B)} \lesssim
R^{-1-\frac{d}{p'}} \norm{Dv}_{L^{p'}(B(\bar y, R)\cap B)} \lesssim R^{-1-\frac{d}{p'}} \norm{D v}_{L^{p'}(B)} \lesssim R^{-1-\frac{d}{p'}} \norm{g}_{L^{p'}(B)}.
\end{equation}

Therefore, we have
\[
\Abs{\int_{(B(\bar y,2R)\setminus B(\bar y,R))\cap B} u\, g\,} \lesssim 
r R^{-1-\frac{d}{p'}}\,\norm{\vec b}_{L^1(B(\bar y,r) \cap B)}\, \norm{g}_{L^{p'}((B(\bar y,2R)\setminus B(\bar y,R))\cap B)}.
\]
Thus, by H\"older's inequality and the duality, we obtain
\[
\norm{u}_{L^1((B(\bar y, 2R)\setminus B(\bar y,R))\cap B)} \lesssim 
R^{\frac{d}{p'}} \norm{u}_{L^{p}((B(\bar y, 2R)\setminus B(\bar y,R))\cap B)}
\lesssim r R^{-1} \norm{\vec b}_{L^1(B(\bar y, r)\cap B)},
\]
which corresponds to \cite[(Eq.~(2.5)]{DK17} and implies that $T_2$ satisfies the hypothesis of \cite[Lemma 2.1]{DK17}.

Next, we turn to $T_3$.
For $\bar y \in B$ and $0<r<\frac12$, let $b \in L^p(B)$ be supported in $B(\bar y, r) \cap B$ with mean zero.
Let $u \in W^{2,p}(B)\cap W^{1,p}_0(B)$ be the solution of the problem
\[
\dv^2 (\mathbf A_0 u) =  \tr(\mathbf A_0 D^2 u)=b\;\mbox{ in }\; B;\quad
u=0\;\mbox{ on } \; \partial B.
\]
For any $R\ge 2r$ such that $B\setminus B(\bar y, R) \neq \emptyset$, and for $g \in C^\infty_c((B(\bar y,2R)\setminus B(\bar y,R))\cap B)$, let $v \in W^{2,p'}(B) \cap W^{1,p'}_0(B)$ be the solution to the problem
\[
\tr(\mathbf A_0 D^2 v)=  g\;\mbox{ in }\; B;\quad
v=0 \;\mbox{ on } \; \partial B.
\]
Note that we have the identity
\[
\int_{B} u \, g = \int_{B}  b v= \int_{B(\bar y ,r)\cap B} b\cdot (v-(v)_{B(\bar y ,r)\cap B}).
\]
Similar to \eqref{eq0718sun}, we derive
\[
\norm{D v}_{L^\infty(B(\bar y, r)\cap B)} \lesssim
R^{-1-\frac{d}{p'}} \norm{v}_{L^{p'}(B(\bar y, R)\cap B)} \lesssim R^{-1-\frac{d}{p'}} \norm{v}_{L^{p'}(B)} \lesssim R^{-1-\frac{d}{p'}} \norm{g}_{L^{p'}(B)}.
\]

Therefore, we have
\[
\Abs{\int_{(B(\bar y,2R)\setminus B(\bar y,R))\cap B} u\, g\,} \lesssim
r R^{-1-\frac{d}{p'}}\,\norm{b}_{L^1(B(\bar y,r) \cap B)}\, \norm{g}_{L^{p'}((B(\bar y,2R)\setminus B(\bar y,R))\cap B)}.
\]
The rest of the proof is similar, and we conclude that $T_3$ satisfies the hypothesis of \cite[Lemma 2.1]{DK17}.
\qed

\subsection{Proof of Lemma~\ref{lem2059mon}}
By the same reasoning as in the proof of \cite[Lemma 2.5]{DEK18}, the problem can be reduced to the following:
\begin{equation}		\label{230508@eq1}
\Delta u=0 \;\text { in }\; B_{2}^+,
\quad  u=\vec a\cdot x+b \;\text{ on }\;T_2,
\end{equation}
where $\vec a\in \bR^d$ and $b\in \bR$.
By differentiating \eqref{230508@eq1} twice in the tangential directions $x_k$ and $x_l$, where $k, l \in \set{1,\ldots, d-1}$, we obtain that $v_{kl}=D_{kl} u$ satisfies
\[
\Delta v_{kl}=0 \;\text { in }\; B_{2}^+,
\quad  v_{kl}=0 \;\text{ on }\;T_2.
\]
By the classical estimates for harmonic functions, we have
\[
\norm{v_{kl}}_{L^\infty(B_1^+)}+\norm{Dv_{kl}}_{L^\infty(B_1^+)}\le C\norm{v_{kl}}_{L^2(B_{3/2}^+)},
\]
which implies that
\begin{equation}		\label{230508@eq2}
\norm{D_{kl} u}_{L^\infty(B_1^+)}+\norm{DD_{kl} u}_{L^\infty(B_1^+)}\le C\norm{D_{kl} u}_{L^2(B_{3/2}^+)}, \quad k,l\in \set{1,\ldots, d-1}.
\end{equation}

Next, from the equation, we observe that
\[
D_{dd}u=-\sum_{k=1}^{d-1} D_{kk}u=0 \;\text{ on }\; T_2.
\]
Therefore, $v_d=D_d u$ satisfies
\[
\Delta v_{d}=0 \;\text { in }\; B_{2}^+,
\quad  D_d v_{d}=0 \;\text{ on }\;T_2.
\]
Again, by the classical estimates for harmonic functions, we have
\[
\norm{Dv_d}_{L^\infty(B_1^+)}+\norm{D^2 v_d}_{L^\infty(B_1^+)}\le C \norm{Dv_d}_{L^2(B_{3/2}^+)},
\]
which implies
\begin{equation}		\label{230508@eq3}
\norm{DD_d u}_{L^\infty(B_1^+)}+\norm{D^2 D_d u}_{L^\infty(B_1^+)}\le C \norm{DD_d u}_{L^2(B_{3/2}^+)}.
\end{equation}

Combining \eqref{230508@eq2} and \eqref{230508@eq3}, we obtain the following inequality:
\begin{equation}		\label{230508@eq4}
\norm{D^2 u}_{L^\infty(B_1^+)}+\norm{D^3 u}_{L^\infty(B_1^+)}\le C\norm{D^2 u}_{L^2(B_{3/2}^+)}.
\end{equation}
Thus, the desired estimate follows from \eqref{230508@eq4}, the interpolation inequality
\[
\norm{D^2 u}_{L^2(B_{3/2}^+)}+\norm{u}_{L^\infty(B_{3/2}^+)}\le \varepsilon \norm{D^3 u}_{L^\infty(B_{3/2}^+)}+C \norm{u}_{L^{1/2}(B_{3/2}^+)},
\]
and a standard iteration argument.
\qed


\end{document}